\newcommand\blfootnote[1]{%
  \begingroup
  \renewcommand\thefootnote{}\footnote{#1}%
  \addtocounter{footnote}{-1}%
  \endgroup
}
\newtheorem{Lem}{Lemma}[section]
\newtheorem{Prop}[Lem]{Proposition}
\newtheorem{Cor}[Lem]{Corollary}
\newtheorem{Thm}[Lem]{Theorem}
\newtheorem{Def}[Lem]{Definition}
\newtheorem{Rem}[Lem]{Remark}
\newtheorem{Expl}[Lem]{Example}
\newenvironment{proof}[1][Proof]{\textrm{\bf #1.} }{\hfill$\Box$\medskip\medskip}
\newcommand\Gin{\operatorname{gin}}
\newcommand\Shad{{\operatorname{Shad}}}
\newcommand\supp{{\operatorname{supp}}}
\newcommand\lex{{\operatorname{lex}}}
\newcommand\slex{{\operatorname{slex}}}
\newcommand\pd{{\operatorname{pd}}}
\newcommand\reg{{\operatorname{reg}}}
\newcommand\im{{\operatorname{im}}}
\newcommand\lcm{{\operatorname{lcm}}}
\newcommand\depth{{\operatorname{depth}}}
\newcommand\bcos{{\operatorname{b-cosize}}}
\newcommand\cosi{{\operatorname{cosize}}}
\newcommand\A{\mathcal{A}}
\def\NZQ{\mathbb}
\def\FF{{\NZQ F}}
\def\QQ{{\NZQ Q}}
\let\emptyset\varnothing
\begin{document}

\title{Projective dimension and Castelnuovo--Mumford regularity of $t$--spread ideals}	
\author{Luca Amata, Marilena Crupi, Antonino Ficarra}	

\newcommand{\Addresses}{{
\footnotesize
\textsc{Department of Mathematics and Computer Sciences, Physics and Earth Sciences, University of Messina, Viale Ferdinando Stagno d'Alcontres 31, 98166 Messina, Italy}
\begin{center}
 \textit{E-mail addresses}: \texttt{lamata@unime.it};  \texttt{mcrupi@unime.it}; \texttt{antficarra@unime.it}
\end{center}
}}
\date{}
\maketitle
\Addresses

\begin{abstract}
We study some algebraic invariants of $t$--spread ideals, $t\ge 1$, such as the projective dimension and the Castelnuovo--Mumford regularity, by means of well--known graded resolutions. We state upper bounds for these invariants and, furthermore, we identify a special class of $t$--spread ideals for which such bounds are optimal.
\blfootnote{
	\hspace{-0,3cm} \emph{Keywords:} monomial ideals, minimal graded resolution, Taylor resolution, Koszul resolution, $t$--spread ideals, regularity, edge ideals.
	
	\emph{2020 Mathematics Subject Classification:} 05E40, 13B25, 13D02, 16W50.
}
\end{abstract}
	
\section{Introduction} Let $K$ be a field and let $S=K[x_1,\dots,x_n]$ be the standard polynomial ring in $n$ variables $x_1,\dots,x_n$ with coefficients in $K$. The study of the algebraic invariants of graded ideals in $S$ is a central topic in commutative algebra. Among the graded ideals, monomial ideals play an essential role. Indeed,
for any graded ideal $I$ of $S$, if $K$ is a field of characteristic $0$, the generic initial ideal of $I$, $\Gin(I)$, which is a monomial ideal, preserves the extremal Betti numbers of $I$, and consequently,  it preserves the projective dimension and the Castelnuovo--Mumford regularity \cite{BCP}. On the other hand, a particular subclass of monomial ideals, the squarefree monomial ideals, are related to many combinatorial structures such as partial ordered sets, simplicial complexes, graphs and hypergraphs. Thus, to study the extremal combinatorics of such objects, one can use tools from commutative and homological algebra and examine the defining squarefree ideals associated to these objects. 

Recently, Ene, Herzog and Qureshi, \cite{EHQ}, have generalized the notion of squarefree ideals. Given an integer $t\ge 1$, a monomial $u=x_{i_1}x_{i_2}\cdots x_{i_d}$, $1\le i_1<i_2<\cdots<i_d\le n$ is said \emph{$t$--spread} if $i_{j+1}-i_j\ge t$, for all $j=1,\dots,d-1$. A $t$--spread ideal is a monomial ideal of $S$ generated by $t$--spread monomials. If $t=1$, a $t$--spread ideal is just a squarefree ideal. This latter class of ideals has been vastly studied by many authors. One expects that many of the results which hold for squarefree ideals have ``$t$--spread" analogues. In this direction, C. Andrei--Ciobanu has generalized the classical Kruskal--Katona theorem, \cite{CAC}. Many other authors, including the ones of this paper, have examined $t$--spread ideals, and especially the class of $t$--spread strongly stable ideals, discussing some algebraic invariants \cite{ATSpread21, AC2,AFC1,AFC2,AEL,RD}.

In this paper we focus our attention on the computation of projective dimension and regularity of $t$--spread ideals, for $t\ge 1$. Section \ref{sec:1} contains basic notations and
terminology from commutative algebra and combinatorics that will be used in the paper.
In Section \ref{sec:2}, if $I$ is a squarefree ideal of $S$, we rewrite both a result on an upper bound for the projective dimension of $I$ \cite[Corollary 8]{CLT} and a result on an upper bound for the regularity of $I$ given in \cite[Corollary 2.4]{HPV},  \emph{via} homological tools and handling of monomials (Theorem \ref{thm:mainteorragtspread}). Our techniques are completely different from those in \cite{CLT, HPV} and we believe that they can be further improved in order to achieve \emph{sharper} bounds for both the projective dimension and the regularity of a $t$--spread ideal, $t\ge 1$. 
Hence, we obtain a bound for the regularity of a $t$--spread ideal, for all $t\ge1$, generalizing the classical bound due to Hochster \cite{HOC} for the squarefree case ($t=1$). It is $n-(t-1)$ (Theorem \ref{thm:maximumregtspread}).
An example of a $t$--spread ideal whose regularity is given by such a value is the \textit{Pascal ideal of type $(n,t)$}, where $n$ is the number of the indeterminates of $S$ (Definition \ref{def:Pascal}). 
Finally, in Section \ref{sec:appl}, we provide some applications of the main results for edge ideals  and some related classes of ideals.\medskip

\section{Preliminaries}\label{sec:1}

Let $S=K[x_1,\dots,x_n]$ be a polynomial ring in $n$ variables over a field $K$ with the standard grading, \emph{i.e.}, each $\deg x_i =1$.  For any graded ideal $I$ of $S$, there exists the unique minimal graded free $S$--resolution\smallskip
\begin{equation*}\label{eq:free}
\FF: 0\rightarrow F_s\xrightarrow{\ d_s\ } F_{s-1}\xrightarrow{d_{s-1}}\cdots\xrightarrow{\ d_2\ }F_1\xrightarrow{\ d_1\ }F_0\xrightarrow{\ d_0\ }S/I\rightarrow0,
\end{equation*}
with $F_i=\bigoplus_jS(-j)^{\beta_{i,j}}$. The numbers $\beta_{i,j}= \beta_{i,j}(S/I)$ are called the graded Betti numbers of $S/I$, while $\beta_i(S/I) = \sum_j \beta_{i,j}(S/I)$ are called the total Betti numbers of $S/I$. Recall, that the \emph{projective dimension} and the \emph{Castelnuovo--Mumford regularity} of $S/I$ are defined, respectively, as follows:
\begin{align*}
\pd(S/I)& = \max\{i:\beta_i(S/I)\neq 0\},\\
\reg(S/I)&= \max\big\{\mbox{$j-i$ : $\beta_{i,j}(S/I)\ne0$, for some  $i$ and $j$}\big\} \\
&= \max\big\{\mbox{$j$ : $\beta_{i,i+j}(S/I)\neq 0$, for some $i$}\big\}.
\end{align*}
Note that $\pd(I) = \pd(S/I)-1$ and $\reg(I) = \reg(S/I)+1$.
More precisely, the projective dimension $\pd(I)$ is the length of a minimal graded free resolution of the graded ideal $I$. 
Furthermore, the regularity of $I$ is at least the smallest total degree of a
generator of $I$, and if all the minimal generators of $I$ lie in the same degree, then $I$ has linear free resolution precisely when that degree equals $\reg(I)$ \cite[Lemma 5.55]{MS}.\\

The invariants above have been generalized in \cite{BCP} by the notion of \emph{extremal Betti number}. 
\begin{Def}\label{def:extr1}
An $i$--th Betti number $\beta_{i,j}(I)\neq 0$ is extremal if $\beta_{p,q}(I) = 0$ for all $p$ and $q$ satisfying the
following three conditions: (i) $p \ge i$, (ii) $p-q \ge i-j$, and (iii) $q \ge j +1$.
\end{Def}

Now, let $I$ be a graded ideal of the polynomial ring $S$. 
$I$ is a (monomial) squarefree ideal of $S$ if it is generated by squarefree monomials of $S$.	\\

For a positive integer $n$, we denote by $[n]$ the set $\{1,\dots,n\}$.

Recall that a \textit{simplicial complex} $\Delta$ on the vertex set $[n]$ is a family of subsets of $[n]$, such that 
\begin{enumerate}
\item[--] $\{i\}\in\Delta$ for all $i\in[n]$, and 
\item[--] if $F\subseteq \Delta$, $G\subseteq F$, we have $G\in\Delta$. 
\end{enumerate}

The dimension of $\Delta$ is the number $d=\max\{|F|-1:F\in\Delta\}$. It is well known that for any squarefree ideal $I$ of $S$ there exists a unique simplicial complex $\Delta$ on $[n]$ such that $I=I_\Delta$, where $I_\Delta$ is the Stanley--Reisner ideal of $\Delta$, \emph{i.e.}, the ideal of $S$ generated by all squarefree monomials $x_{i_1}x_{i_2}\cdots x_{i_r}$, $1\le i_1 < i_2 < \cdots < i_r\le n$, with $\{i_1, i_2,\ldots, i_r\}\notin \Delta$ \cite{JT}. The Stanley--Reisner ideal, $I_\Delta$, above defined is a squarefree ideal of $S$.

The notion of squarefree ideal has been generalized in \cite{EHQ} by the concept of $t$--spread ideal.

Given an integer $t\ge0$, we say that a monomial $x_{i_1}x_{i_2}\cdots x_{i_d}$ of $S$ with $1\le i_1\le i_2\le\dots\le i_d\le n$ is $t$--spread, if $i_{j+1}-i_j\ge t$, for all $j=1,\dots,d-1$. If $t\ge1$, every $t$--spread monomial is a squarefree monomial. We say that a monomial ideal $I$ is $t$--spread if $I$ is generated by $t$--spread monomials. If $t\ge1$, every $t$--spread ideal is a squarefree ideal.

Using the same notations as in \cite{CAC}, we denote by $[I_j]_t$ the set of all $t$--spread monomials of degree $j$ of a monomial ideal $I$ of $S$ and by $M_{n,d,t}$ the set of all $t$--spread monomials of degree $d$ of the ring $S$. From \cite[Theorem 2.3, (d)]{EHQ} we have that
$$
|M_{n,d,t}|\ =\ \binom{n-(d-1)(t-1)}{d}.
$$

If $T$ is a non empty subset of $M_{n,d,t}$, we define the \textit{$t$--shadow} of $T$ to be the set
\[
\Shad_t(T)\ =\ \big\{x_iw\,:\, w\in T\, \mbox{and $x_iw$ is $t$--spread monomial, $i=1,\dots,n$}\big\}.
\]

Let $t\ge1$. To an arbitrary monomial ideal $I$ of $S$, one can associate the following vector of integers:
$$f_t(I)=(f_{t,-1}(I),f_{t,0}(I),\dots),$$
with
\[
f_{t,j-1}(I)=\ |M_{n,j,t}|-|[I_j]_t|,\,\, j\ge 0.
\]
$f_t(I)$ is called the $f_t$--vector of $I$.

We close the section with the following definitions from \cite{CAC, EHQ}.

For a monomial $u\in S$, the set $\supp(u)=\{i:x_i\ \text{divides}\ u\}$ is called the \emph{support} of $u$.

\begin{Def} A non empty subset $L$ of $M_{n,d,t}$ is called a $t$--spread strongly stable set, if the following property holds:
for all $u\in L$, all $j\in \supp(u)$ and all $1\le i< j$ such that $x_i(u/x_j)\in M_{n,d,t}$, we have $x_i(u/x_j)\in L$.

A $t$--spread monomial ideal $I$ is $t$--spread strongly stable if $[I_j]_t$ is a $t$--spread strongly stable set for all $j$.
\end{Def}

Let $\ge_{\slex}$ be the usual squarefree lex order \cite{AHH2}.

\begin{Def} A non empty subset $L$ of $M_{n,d,t}$ is said a \emph{$t$--spread lexsegment set} if
for all $u\in L$ and all $v\ge_{\slex}u$, $v\in M_{n,d,t}$, one has $v\in L$.

A $t$--spread ideal $I$ is called a $t$--spread lexsegment ideal if $[I_d]_t$ is a $t$--spread lexsegment set for all $d$.
\end{Def}

If $I$ is $t$--spread strongly stable, one can associate to $I$ a unique $t$--spread lexsegment ideal, denoted by $I^{t,\text{lex}}$, such that  $f_t(I)=f_t(I^{t,\text{lex}})$ \cite[Theorem 2.1]{CAC}. In general,  a $t$--spread  ideal $I$ may not have a $t$--spread lexsegment ideal with the same $f_t$--vector \cite[Remark 2.5]{CAC}. Indeed, it is still open the problem of classifying all $t$--spread ideals which have an associated $t$--spread lexsegment ideal with the same $f_t$--vector.


Set
\[\max(u) = \max\big\{i:i\in\supp(u)\big\},\,\, \mbox{for $u\ne 1$},\]
and $\max(u)=0$, if $u=1$.  \\

For a $t$--spread strongly stable ideal $I$, one has \cite{EHQ}:
\[\pd(I) = \max\{\max(u)-t(\deg(u)-1)-1: u \in G(I)\},\]
\[\reg(I) = \max\{\deg(u): u \in G(I)\},\]
where $G(I)$ is the unique minimal set of monomial generators of $I$.

\section{Projective dimension and regularity}\label{sec:2}

In this section, if $S = K[x_1, \ldots, x_n]$, we discuss some optimal bounds for the projective dimension and the regularity of squarefree ideals of $S$. It is well--known by a result of Hochster \cite{HOC} that for squarefree ideals the maximum value of the Castelnuovo--Mumford regularity is given by $n$.  
Indeed, $x_1\cdots x_n$ is the squarefree monomial of the largest degree in a polynomial ring in $n$ variables. 
For $t$--spread ideals with $t\ge 2$ this bound can be refined, as we will see in the sequel. \\

Let us start by quoting a result from \cite{EHGB}.

\begin{Thm}\label{thm:betaijlebij}
\textup{\cite[Theorem 4.26]{EHGB}} Let $M$ be a finitely generated graded $S$--module, 
\[\FF: \cdots\rightarrow F_1\rightarrow F_0\rightarrow M\rightarrow0,\]
a minimal graded free resolution of $M$ with $F_i=\bigoplus_jS(-j)^{\beta_{i,j}}$, and $$\mathbb{G}:\cdots\rightarrow G_1\rightarrow G_0\rightarrow M\rightarrow0$$ a graded free $S$--resolution of $M$ with $G_i=\bigoplus_jS(-j)^{\textup{b}_{i,j}}$. Then
$$
\beta_{i,j}(M)\ \le\ \textup{b}_{i,j}
$$
for all $i$ and $j$.
\end{Thm}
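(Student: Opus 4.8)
The plan is to pass to $\Tor$ and compare dimensions strand by strand in the internal degree. Write $\mathfrak{m}=(x_1,\dots,x_n)$ for the graded maximal ideal of $S$, so that $S/\mathfrak{m}\cong K$. I would first recall the standard description of the graded Betti numbers of a finitely generated graded $S$--module $M$, namely
$$\beta_{i,j}(M)\ =\ \dim_K\Tor_i^S(M,K)_j.$$
This is obtained by computing $\Tor_\bullet^S(M,K)$ from the minimal graded free resolution $\FF$: minimality means that all entries of the matrices representing $d_1,\dots,d_s$ lie in $\mathfrak{m}$, so the complex $\FF\o_S K$ has zero differentials, whence $\Tor_i^S(M,K)_j\cong(F_i\o_S K)_j\cong K^{\beta_{i,j}}$ as graded $K$--vector spaces.

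Next I would recompute the same $\Tor$ modules using the given resolution $\mathbb{G}$. Since $\Tor$ is independent of the chosen free resolution, there is a graded isomorphism $\Tor_i^S(M,K)\cong H_i(\mathbb{G}\o_S K)$; it is homogeneous of degree $0$ because $\mathbb{G}$ is graded and the comparison maps between $\FF$ and $\mathbb{G}$ lifting $\operatorname{id}_M$ may be chosen homogeneous of degree $0$. The $i$--th term of $\mathbb{G}\o_S K$ is $G_i\o_S K=\bigoplus_j K(-j)^{\textup{b}_{i,j}}$, so its degree--$j$ component is a $K$--vector space of dimension $\textup{b}_{i,j}$. The homology $H_i(\mathbb{G}\o_S K)_j$ is a subquotient of this vector space, hence
$$\dim_K H_i(\mathbb{G}\o_S K)_j\ \le\ \textup{b}_{i,j}.$$
Combining the two displays yields $\beta_{i,j}(M)=\dim_K\Tor_i^S(M,K)_j=\dim_K H_i(\mathbb{G}\o_S K)_j\le\textup{b}_{i,j}$ for all $i$ and $j$.

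There is no serious obstacle in this argument; the one point to watch is that every step --- the resolutions themselves, the comparison maps, and the functor $-\o_S K$ --- is compatible with the $\ZZ$--grading, so that the estimate really takes place separately in each internal degree $j$. If one prefers a more structural route, one can instead show via the comparison theorem that any endomorphism of $\FF$ lifting $\operatorname{id}_M$ is an automorphism (modulo $\mathfrak{m}$ it induces the identity on $\Tor_\bullet^S(M,K)$, which for the minimal resolution is just $\FF\o_S K$ with zero differential), conclude that the composite $\FF\to\mathbb{G}\to\FF$ of comparison maps is an isomorphism of complexes, so that $\FF$ is a direct summand of $\mathbb{G}$ as a complex of graded free modules, and then read off $\textup{b}_{i,j}\ge\beta_{i,j}(M)$ by comparing ranks in each bidegree.
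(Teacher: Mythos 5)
Your proof is correct. The paper does not prove this statement at all --- it is quoted verbatim from \cite[Theorem 4.26]{EHGB} --- and your argument (identifying $\beta_{i,j}(M)$ with $\dim_K\Tor_i^S(M,K)_j$ via the vanishing of the differentials of $\FF\o_S K$, then bounding that dimension by the dimension of the degree-$j$ strand of $G_i\o_S K$, of which the homology is a subquotient) is exactly the standard textbook proof of the cited result; the grading compatibility you flag is the only point requiring care, and you handle it correctly.
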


Let $I$ be a squarefree ideal of $S$ with minimal set of monomial generators given by  $G(I)=\{u_1,\dots,u_p\}$.

Set
\[\begin{aligned}
\A_0 &=  \big\{\supp(u): u\in G(I)\big\} = \big\{\supp(u_1),\dots,\supp(u_p)\big\},&\\
\A_i &= \Big\{\bigcup_{\ell=1}^{i+1}\supp(u_{j_\ell}): \{j_1 < j_2 < \cdots < j_{i+1}\}\subseteq [p]\Big\},\, \mbox{for $i\ge 1$}.&
\end{aligned}
\]
The integer
$$s = \min\Big\{i : \mbox{ for all $A \in \A_i$}, A = \bigcup_{\ell=1}^p\supp(u_\ell)\Big\}$$
will be called the \emph{support index} of $I$.
 
One can quickly observe that if the sets in $\A_0$ are pairwise disjoint, then the support index of $I$ is equal to $p-1=\vert G(I) \vert -1$.
\begin{Expl}\label{ex:sets} \rm Let $S=\QQ[x_1, \ldots, x_{11}]$ and consider the following squarefree ideal $I=(x_2x_4, x_1x_5x_7, x_3x_7x_9x_{11})$ of $S$. Setting $u_1 = x_2x_4$, $u_2 =x_1x_5x_7$ and $u_3 =  x_3x_7x_9x_{11}$, we have
\begin{eqnarray*}
\A_0&=&\big\{\supp(u_1),\supp(u_2),\supp(u_3)\big\} = \big\{\{2, 4\}, \{1, 5, 7\}, \{3, 7, 9, 11\}\big\},\\
\A_1&=&\Big\{\bigcup_{\ell=1}^{2}\supp(u_{j_\ell}): \{j_1 < j_2\}\subseteq [3]\Big\}\\
&=&\big\{\{1,2,4,5,7\}, \{2,3,4,7,9,11\},\{1,3,5,7,9, 11\}\big\},\\
\A_2&=&\Big\{\bigcup_{\ell=1}^{3}\supp(u_{j_\ell}): \{j_1 < j_2 < j_3\}\subseteq [3]\Big\}= \big\{\{1,2,3,4,5,7,9,11\}\big\}\\
&= &\Big\{\bigcup_{\ell=1}^3\supp(u_\ell)\Big\}.
\end{eqnarray*}
Hence, $s=2$ is the support index of the ideal $I$. 
\end{Expl}

If one identifies every squarefree monomial $u\in S$ with its support, then if $I$ is a squarefree ideal of $S$ with $G(I)=\{u_1,\dots,u_p\}$, one obtains an equivalent definition of the support index $s$ defined above. More precisely, let
$\ell$ be the smallest number with the property that for all integers $1\le j_1< j_2 < \cdots < j_\ell\le p$, one has 
\[\lcm(u_{j_1}, u_{j_2}, \ldots, u_{j_\ell}) = \lcm(u_1,\dots,u_p).\]

Such an integer introduced in \cite{HPV} is called $\bcos_S(I)$. Hence, the support index $s$ of $I$ is equal to the integer $\bcos_S(I)-1$.  In \cite[Corollary 8]{CLT}, the authors proved that the projective dimension of a squarefree ideal $I$ of $S$ can be bounded by $\bcos_S(I)-1$. 

In \cite{HPV},  the notion of cosize of a squarefree ideal $I$ of $S$ with minimal system of generators given by $\{u_1,\dots,u_p\}$  has been introduced too. More precisely, let $w$ be the smallest number $\ell$ with the property that there exist $1\le j_1< j_2 < \cdots < j_\ell \le p$ such that 
\[\lcm(u_{j_1}, u_{j_2}, \ldots, u_{j_\ell}) = \lcm(u_1,\dots,u_p),\]
the number $$\deg \lcm(u_1,\dots,u_p) - w$$ is called cosize of $I$ and denoted by $\cosi(I)$. Moreover, in \cite[Corollary 1.4]{HPV}, the authors  proved that $\reg(I) \le \cosi(I) +1$.

Both the cited bounds were obtained by some results due to Lyubeznik \cite{GL} and applying the Alexander duality.
We will show how to compute the projective dimension and the regularity of squarefree ideals (thus, also for $t$--spread ideals) by using only set--theoretic operations. A fundamental tool will be the Taylor complex, which for each monomial ideal provides a graded free resolution, but which in general is not minimal \cite{JT}.

\begin{Thm}\label{thm:mainteorragtspread}
Let $I$ be a squarefree ideal of $S=K[x_1,\dots,x_n]$ with minimal generating set $G(I)=\{u_1,\dots,u_p\}$. Let $s$ be the support index of $I$, then
\begin{enumerate}
\item[\em(a)] $\pd(I) \le \min\{s,n\} =\min\{\bcos_S(I)-1, n\}$,
\item[\em(b)] $\reg(I) \le \cosi(I) +1$.
\end{enumerate}
\end{Thm}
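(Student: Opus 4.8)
The plan is to use the Taylor complex of $I$ as an explicit (generally non-minimal) graded free resolution, and then compare it to the minimal one via Theorem~\ref{thm:betaijlebij}. Recall that the Taylor complex $T_\bullet$ associated to $G(I)=\{u_1,\dots,u_p\}$ has free modules $T_i=\bigoplus_{\sigma} S(-\deg\lcm(u_\ell:\ell\in\sigma))$, where $\sigma$ ranges over the $(i+1)$-subsets of $[p]$; so $\mathrm{b}_{i,j}\ne 0$ only if there is an $(i+1)$-subset $\sigma\subseteq[p]$ with $\deg\lcm(u_\ell:\ell\in\sigma)=j$. By Theorem~\ref{thm:betaijlebij}, $\beta_{i,j}(S/I)\le \mathrm{b}_{i,j}$, hence $\beta_{i,j}(S/I)=0$ whenever $\mathrm{b}_{i,j}=0$. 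Translating to the ideal via $\pd(I)=\pd(S/I)-1$ and $\reg(I)=\reg(S/I)+1$, it suffices to control in which homological degrees $i$ and internal degrees $j$ the Taylor modules $T_i$ can be nonzero.

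\textbf{Part (a).} First I would observe that, since each squarefree monomial is identified with its support and $\deg\lcm(u_\ell:\ell\in\sigma)=\bigl|\bigcup_{\ell\in\sigma}\supp(u_\ell)\bigr|$, the condition defining the support index $s$ says precisely: for every $(s+1)$-subset $\sigma$ of $[p]$ one has $\bigcup_{\ell\in\sigma}\supp(u_\ell)=\bigcup_{\ell=1}^p\supp(u_\ell)=:U$. The key step is then to show that the \emph{reduced} Taylor complex can be truncated after homological position $s+1$ without changing homology, because all the syzygetic information is already ``saturated'' at that stage. Concretely: once $\bigcup_{\ell\in\sigma}\supp(u_\ell)=U$ for all $\sigma$ of size $s+1$, every Taylor generator in homological degree $\ge s+1$ has the same multidegree $\lcm(u_1,\dots,u_p)$, and the portion of $T_\bullet$ in homological degrees $\ge s+1$ is (up to a degree shift) the augmented simplicial chain complex of the full simplex on the vertex set $\{s+1,\dots,p\}$ — wait, more carefully, it is the chain complex of the skeleton-type subcomplex consisting of all faces of $[p]$ of cardinality $\ge s+1$; this complex is acyclic in the relevant range, so the truncated Taylor complex $0\to T'_{s+1}\to T_s\to\cdots\to T_0\to S/I\to 0$ (with $T'_{s+1}$ the appropriate kernel/cokernel collapse) is still a free resolution. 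Hence $\beta_i(S/I)=0$ for $i>s+1$, giving $\pd(S/I)\le s+1$, i.e. $\pd(I)\le s$. The bound $\pd(I)\le n$ is automatic from the Auslander--Buchsbaum formula (or from $\depth S/I\ge 0$), since $S$ is regular of dimension $n$; combining yields $\pd(I)\le\min\{s,n\}$. The equality with $\bcos_S(I)-1$ is exactly the identification of $s=\bcos_S(I)-1$ spelled out in the text just before the statement, so nothing new is needed there.

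\textbf{Part (b).} For the regularity bound I would again read off internal degrees from the Taylor complex: $\mathrm{b}_{i,j}\ne 0$ forces $j=\bigl|\bigcup_{\ell\in\sigma}\supp(u_\ell)\bigr|$ for some $(i+1)$-subset $\sigma$, and by definition of the cosize-witnessing number $w$ there is some $\sigma_0$ with $|\sigma_0|=w$ and $\bigl|\bigcup_{\ell\in\sigma_0}\supp(u_\ell)\bigr|=\deg\lcm(u_1,\dots,u_p)=:D$. The point is that for a subset $\sigma$ of size $i+1<w$ one necessarily has $\bigl|\bigcup_{\ell\in\sigma}\supp(u_\ell)\bigr|<D$ is \emph{not} what we want; rather we want: in homological degree $i$, the largest internal degree $j$ that occurs satisfies $j-i\le \cosi(I)+1$. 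I would argue this by splitting on $i+1$ versus $w$: if $i+1\ge w$ then $j\le D$ while $i\ge w-1$, so $j-i\le D-(w-1)=\cosi(I)+1$; if $i+1<w$ then every $(i+1)$-subset $\sigma$ has $j=\bigl|\bigcup_{\ell\in\sigma}\supp(u_\ell)\bigr|\le \sum_{\ell\in\sigma}\deg u_\ell$, and more to the point a cruder but sufficient estimate $j\le i+1+\bigl(\text{something}\bigr)$ — the clean way is: adding one more generator to a union of $k$ supports raises the union size by at most $\deg$ of that generator, but the sharp bookkeeping is that $\bigl|\bigcup_{\ell\in\sigma}\supp(u_\ell)\bigr|-|\sigma|$ is nondecreasing as $\sigma$ grows only up to $D-w$, so $j-i=\bigl|\bigcup_{\ell\in\sigma}\supp(u_\ell)\bigr|-(|\sigma|-1)\le (D-w)+1=\cosi(I)+1$ for all $\sigma$ with $|\sigma|\le w$, using minimality of the generating set to ensure the difference does not exceed its value at $|\sigma|=w$. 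Taking the max over all $i$ gives $\reg(S/I)\le\cosi(I)+1$, hence $\reg(I)=\reg(S/I)+1\le\cosi(I)+1$; \textbf{here I expect the main obstacle}: making rigorous the monotonicity claim ``$\bigl|\bigcup_{\ell\in\sigma}\supp(u_\ell)\bigr|-|\sigma|$ cannot exceed its value at the cosize-witness'', which is the combinatorial heart of the argument and is where the minimality of $G(I)$ and the definition of $w$ as the \emph{smallest} such cardinality must both be used carefully.

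Finally I would remark that (a) recovers \cite[Corollary~8]{CLT} and (b) recovers \cite[Corollary~1.4]{HPV}, but obtained purely from the Taylor complex together with Theorem~\ref{thm:betaijlebij}, with no appeal to Lyubeznik's results or Alexander duality; this is the point emphasized in the introduction, and it is what makes the method amenable to the $t$-spread refinement pursued afterward.
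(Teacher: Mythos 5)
Your overall strategy is the paper's: take the Taylor complex as an explicit graded free resolution, invoke Theorem \ref{thm:betaijlebij} to bound the graded Betti numbers, and then do degree bookkeeping on the Taylor generators. However, both parts as written have gaps. In (a), the truncation mechanism is not justified: you cannot cut the Taylor complex at homological position $s+1$ and replace the tail by ``the appropriate kernel/cokernel collapse'' and declare the result a free resolution, because the kernel of $T_{s+1}\to T_s$ (equivalently, the image of $T_{s+2}$) need not be free, and the acyclicity of the high\mbox{-}skeleton chain complex does not by itself produce a shorter \emph{free} resolution. You already have the ingredients for the correct argument, which is the one the paper uses: by Theorem \ref{thm:betaijlebij}, any $j$ with $\beta_{i,j}(I)\neq 0$ at a homological position indexed by subsets of size $\ge s+1$ must equal $\vert\Omega\vert$, where $\Omega=\bigcup_{\ell=1}^p\supp(u_\ell)$; but in the minimal resolution $\FF$ the differential maps each module into $\mathfrak m$ times the previous one, so a nonzero module at position $s+2$ would need a generator of degree strictly greater than $\vert\Omega\vert$ --- impossible. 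That contradiction, not acyclicity of a skeleton, is what kills everything past position $s$.

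In (b) the gap is more substantial: you correctly isolate the combinatorial heart, namely that $\bigl|\bigcup_{\ell\in\sigma}\supp(u_\ell)\bigr|-(|\sigma|-1)\le \cosi(I)+1$ for \emph{every} nonempty $\sigma\subseteq[p]$, and you dispose of the case $|\sigma|\ge w$, but you explicitly leave the case $|\sigma|<w$ open, and the route you sketch would fail: the quantity $\bigl|\bigcup_{\ell\in\sigma}\supp(u_\ell)\bigr|-|\sigma|$ is not monotone in $\sigma$, and minimality of $G(I)$ plays no role here. The correct argument is a short greedy extension. Write $U_\sigma=\bigcup_{\ell\in\sigma}\supp(u_\ell)$. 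If $U_\sigma=\Omega$ then $w\le|\sigma|$ by definition of $w$, so $|U_\sigma|-|\sigma|\le|\Omega|-w$. If $U_\sigma\neq\Omega$, pick $i\in\Omega\smallsetminus U_\sigma$ and a generator whose support contains $i$, adjoin its index to $\sigma$, and repeat; each step enlarges the union by at least one element, so after at most $|\Omega|-|U_\sigma|$ steps you reach some $\sigma'\supseteq\sigma$ with $U_{\sigma'}=\Omega$ and $|\sigma'|\le|\sigma|+|\Omega|-|U_\sigma|$, whence $w\le|\sigma|+|\Omega|-|U_\sigma|$, which is exactly the needed inequality. (For comparison, the paper's own write-up of (b) also elides this point, asserting that the maximum of $j-i$ is attained at the first Taylor step containing a summand $S(-|\Omega|)$; the greedy extension is what makes that assertion true.) With these two repairs your proposal becomes a complete proof along essentially the same lines as the paper's.
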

\begin{proof} In order to simplify the notation, we set
\[
\Omega_i = \supp(u_i),\, i=1,\dots, p\]
and 
\[\Omega  = \bigcup_{i=1}^p\Omega_i.\]
Hence,
\begin{equation}\label{eq:A_0B_0sets}
\begin{aligned}
\A_0\ &= \big\{\Omega_1,\dots,\Omega_p\big\},\\
\A_i\ & = \Big\{\bigcup_{\ell=1}^{i+1}\Omega_{j_\ell}: \{j_1 < j_2 < \cdots < j_{i+1}\}\subseteq [p]\Big\},\, \mbox{for $i=1,\dots,p-1$}.
\end{aligned}
\end{equation}

To get the statements, we use the \emph{Taylor resolution}. Following the same notations as in \cite{JT}, to the sequence $\{u_1,\dots,u_p\}$  of monomial generators of $I$, we associate a complex $\mathbb{T}$ of free $S$--modules defined as follows: let $T_1$ be a free $S$--module with basis $\{e_1, \ldots, e_p\}$. Then
\begin{enumerate}
\item[--] $T_i\ =\ {\bigwedge}^iT_1$. More precisely, $T_i$ is a free $S$--module with basis the elements
$${\bf e}_F\ =\ e_{j_1}\wedge e_{j_2}\dots\wedge e_{j_i},\,\, F=\{j_1<j_2<\dots<j_i\}\subseteq[p];$$ 
\item[--] the differentials $\partial_i:T_i\rightarrow T_{i-1}$, for $i=1,\dots,p$, are defined by
$$
\partial_i({\bf e}_F)\ =\ \sum_{i\in F}(-1)^{\sigma(F,i)}\frac{\lcm(u_j:j\in F)}{\lcm(u_j:j\in F\setminus\{i\})}{\bf e}_{F\setminus\{i\}},
$$
where $\sigma(F,i)=\big|\{j\in F:j<i\}\big|$.
\end{enumerate}
To each basis element ${\bf e}_F$, with $F=\{j_1<j_2<\dots<j_i\}\subseteq[p]$, we assign degree 
\begin{equation}\label{eq1:deg}
\deg({\bf e}_F)=\deg(\lcm(u_j:j\in F))=|A|,
\end{equation}
with $A=\Omega_{j_1}\cup\Omega_{j_2}\cup\dots\cup\Omega_{j_i}\in\A_{i-1}$.
The differentials $\partial_i$ are homogeneous maps of graded free $S$--modules and so 
\begin{equation}\label{eq2:tay}
\mathbb{T}:0\rightarrow T_p\xrightarrow{\ \partial_{p}\ }T_{p-1}\xrightarrow{\ \partial_{p-1}\ }\cdots \xrightarrow{\ \partial_3\ }T_2 \xrightarrow{\ \partial_{2}\ }T_1\rightarrow I\rightarrow0
\end{equation}
is a graded free resolution of $I$ \cite[Theorem 7.1.1]{JT}. 
We can write $T_i=\bigoplus_{j}S(-j)^{\text{b}_{i,j}}$, for some positive integers $\text{b}_{i,j}$, 
and consider the unique minimal graded free $S$--resolution of $I$
$$
\FF: 0\rightarrow F_r\xrightarrow{\ d_r\ } F_{r-1}\xrightarrow{d_{r-1}}\cdots \xrightarrow{\ d_3\ }F_2\xrightarrow{\ d_2\ }F_1\rightarrow I\rightarrow0,
$$
with $F_i=\bigoplus_jS(-j)^{\beta_{i,j}}$, thus (Theorem \ref{thm:betaijlebij})
\begin{equation}\label{eq:mainteorragtspread}
\beta_{i,j}(I)\le\text{b}_{i,j}, \quad \mbox{for all $i$ and $j$}.
\end{equation}
(a) Consider the support index $s=\min\big\{i:  \mbox{ for all $A \in \A_i$}, A = \bigcup_{\ell=1}^p\Omega_\ell = \Omega \big\}$ of the squarefree ideal $I$, then
$$
\pd(I)=r \le s.
$$

Indeed, by the meaning of $s$, $T_s$ is a graded free $S$--module with a basis whose elements have degrees $\le \vert \Omega\vert$, whereas $T_{s+1}$ is a graded free $S$--module with a basis whose elements all have degrees equal to $\vert \Omega\vert$. The same reasoning holds for all $T_j$ with $s+1\leq j\leq p$. 

By (\ref{eq:mainteorragtspread}), $F_{s+2}$ is a graded free $S$--submodule of $T_{s+2}$. If $F_{s+2}\neq 0$, 
then, by the minimality of $\FF$, $\im(F_{s+2})\subseteq\mathfrak{m}F_{s+1}$, where $\mathfrak{m} = (x_1,x_2,\dots,x_n)$. Thus there exists a basis element of $F_{s+2}$ whose degree is greater than $\vert\Omega\vert$.  An absurd. Hence, $\pd(I)\le s$.

On the other hand, by Hilbert Syzygy's Theorem, $\pd(I)\le n$. Hence, $\pd(I)\le\min\{s,n\}$.\\
(b) From (\ref{eq:mainteorragtspread}), we have
\begin{align*}
\reg(I)&=\max \big\{j-i:\beta_{i,j}(I)\ne 0,\ \text{for some}\ i\ \text{and}\ j\big\}\\
&\le\max \big\{j-i:\text{b}_{i,j}\ne 0,\ \text{for some}\ i\ \text{and}\ j \big\}.
\end{align*}
From the Taylor resolution (\ref{eq2:tay}), one gets that the number 
$$\max \big\{j-i:\text{b}_{i,j}\ne 0,\ \text{for some}\ i\ \text{and}\ j\big\}$$ is determined by the \emph{first} step of the resolution $\mathbb{T}$ in which the free $S$--module has at least one direct summand $S(-j)$ with $j = \vert \Omega\vert$. More precisely, if $1\le \ell\le p$ is the smallest integer such that the free $S$--module $T_\ell = \bigoplus_{j}S(-j)^{\text{b}_{\ell,j}}$ has the property that there exists a direct summand $S$ shifted by $j$ with $j=\vert \Omega\vert$ (for some $j$), then
\[\max \big\{j-i:\text{b}_{i,j}\ne 0,\ \text{for some}\ i\ \text{and}\ j\big\} = \vert \Omega\vert - (\ell -1) =  \cosi(I) +1.\]
The assertion follows.
\end{proof}

Let us consider Example \ref{ex:sets}. One can observe that the ideal $I$ has the following minimal graded free $S$--resolution
\[\FF: 0\rightarrow S(-8) \rightarrow S(-5)\oplus S(-6)^2 \rightarrow S(-2)\oplus S(-3)\oplus S(-4)\rightarrow I\rightarrow0.\]
Thus, $\pd(I) = 2 = s$ and $\reg(I)= 6 = \vert \bigcup_{j=1}^3\supp(u_j) \vert -2 = 8-2$.

\begin{Rem}\label{rem:ossregtspread}\rm
From Theorem \ref{thm:mainteorragtspread}, if a squarefree monomial ideal $I$ has $p$ generators, then  
$s\le p-1$.
\end{Rem}

The following example illustrates Theorem \ref{thm:mainteorragtspread}.

\begin{Expl}\rm
Let $S=\QQ[x_1,\ldots,x_9]$, and let $I$ be the following squarefree ideal
\[
I=(x_1x_4, x_1x_3x_8, x_2x_4x_6, x_1x_3x_5x_7x_9).
\]
By using \emph{Macaulay2} \cite{GDS}, the Taylor resolution of $I$ is the following one:\\

\noindent
\resizebox{\textwidth}{!}{
\begin{math}
\begin{aligned}
0\,\xrightarrow{}\,\underset{}{S(-9)}\,\xrightarrow{
\left(\!\begin{smallmatrix}
-x_{5}x_{7}x_{9}\\
-x_{2}x_{6}\\
x_{8}\\
1\\
\end{smallmatrix}\!\right)}\,
\underset{}{S(-6)\oplus S(-7)\oplus S(-8)\oplus S(-9)}\,\xrightarrow{
\left(\!\begin{smallmatrix}
x_{3}x_{8}&0&x_{3}x_{5}x_{7}x_{9}&0\\
-x_{2}x_{6}&x_{5}x_{7}x_{9}&0&0\\
1&0&0&x_{5}x_{7}x_{9}\\
0&-x_{8}&-x_{2}x_{6}&0\\
0&x_{4}&0&x_{2}x_{4}x_{6}\\
0&0&1&-x_{8}\\
\end{smallmatrix}\!\right)}\\
\underset{}{S(-4)^2\oplus S(-6)^3\oplus S(-8)}\,\xrightarrow{
\left(\!\begin{smallmatrix}
-x_{2}x_{6}&-x_{3}x_{8}&0&-x_{3}x_{5}x_{7}x_{9}&0&0\\
x_{1}&0&-x_{1}x_{3}x_{8}&0&0&-x_{1}x_{3}x_{5}x_{7}x_{9}\\
0&x_{4}&x_{2}x_{4}x_{6}&0&-x_{5}x_{7}x_{9}&0\\
0&0&0&x_{4}&x_{8}&x_{2}x_{4}x_{6}\\
\end{smallmatrix}\!\right)}\\
\underset{}{S(-2)\oplus S(-3)^2\oplus S(-5)}
\xrightarrow{
\left(\!\begin{smallmatrix}
x_{1}x_{4}&x_{2}x_{4}x_{6}&x_{1}x_{3}x_{8}&x_{1}x_{3}x_{5}x_{7}x_{9}\\
\end{smallmatrix}\!\right)
}\,
\underset{}{I}\,\rightarrow\,0.\\
\end{aligned}
\end{math}
}\\

Moreover, the sets which allow us to compute the support index $s$ of $I$ are the followings: 
\begin{eqnarray*}
\A_0&=& \big\{\{1, 4\}, \{1, 3, 8\}, \{2, 4, 6\}, \{1, 3, 5, 7, 9\}\big\},\\
\A_1&=&\big\{\{1,2,4,6\}, \{1,2,3,4,6,8\}, \{1,2,3,4,5,6,7,9\}, \{1,3,4,8\},\\
 & &\ \ \{1,3,4,5,7,9\}, \{1,3,5,7,8,9\}\big\},\\
\A_2&=&\big\{\{1,2,3,4,5,6,7,8,9\}, \{1,2,3,4,5,6,7,9\}, \{1,3,4,5,7,8,9\},\\
 & &\ \ \{1,2,3,4,6,8\}\big\},\\
\A_3&=& \big\{\{1,2,3,4,5,6,7,8,9\}\big\}.
\end{eqnarray*}

Hence, $s=3$ and we obtain that $\pd(I)\le 3$ and $\reg(I)\le 9-2=7$. Indeed, in such a case, $\pd(I)=2$ and $\reg(I)=5$.
\end{Expl}

We close the section with an example of a classical squarefree monomial ideal for which the bounds in Theorem \ref{thm:mainteorragtspread} are optimal in some special cases. More precisely, we consider the squarefree Veronese ideal $I_{n, d}$ of degree $d\ge 1$ of $S=K[x_1,\dots,x_n]$. It is the squarefree monomial ideal of $S$ generated by all squarefree monomials of degree $d$. 

If $F\subseteq[n]$, let ${\bf x}_F$ the monomial of  $S=K[x_1,\dots,x_n]$ defined as follows:

\[{\bf x}_F\ =\ \prod_{i\in F}x_i.\]
\begin{Expl}\label{ex:VeroneseSquarefree} \em Let us consider the squarefree Veronese ideal $I_{n,d}$ of degree $d\ge 1$ in $S=K[x_1,\dots,x_n]$. 
It is well--known that $\pd(I_{n,d})=n-d$ and $\reg(I_{n,d})=d$. 

Let us consider the case $d=n-1$. It is clear that in such a case $I=I_{n,n-1}$ is the squarefree monomial ideal of $S$ generated by all monomials ${\bf x}_{[n]}/x_i$, $i=1,\dots,n$. Moreover, $\vert G(I)\vert = n$, $\pd(I)=1$ and $\reg(I)=n-1$.

Now we want to compute the support index $s$ of $I$. Using our methods, if $\Omega_i=\supp({\bf x}_{[n]}/x_i)=[n]\setminus\{i\}$, for all $i=1,\dots,n$, then 
\begin{align*}
\Omega&=\bigcup_{i=1}^n\Omega_i=[n],\\
\mathcal{A}_0&= \big\{[n]\setminus\{i\}:i=1,\dots,n\big\},\\
\mathcal{A}_1&= \Big\{([n]\setminus\{i\})\cup([n]\setminus\{j\}):i,j=1,\dots,n,i\ne j\Big\}=\big\{[n]\big\}.
\end{align*}
Thus, the support index of $I$ is $s=1$ and the bound given in Theorem \ref{thm:mainteorragtspread} for the projective dimension is reached. 

Let us examine the Castelnuovo--Mumford regularity of $I$. One has
\[\vert \Omega\vert - 1= n-1 =\reg(I)\]
Hence, the bound stated in Theorem \ref{thm:mainteorragtspread} for the regularity is optimal. \\

Now, let consider the general case, \emph{i.e.}, $I_{n,d}$, $d\ge 1$.\\
Let $\A_i$ be the set of all  unions of $i+1$ supports of the monomials of $G(I_{n,d})$.
We prove that the support index $s$ of $I_{n,d}$ is
\begin{equation}\label{eq:supportIndexSquarefreeVeronese}
s = \binom{n-1}{d}.
\end{equation}
Indeed, there are $\binom{n-1}{d}$ subsets of the set $[n-1]$ with cardinality $d$, and their union is the set $[n-1]$. Thus $[n-1]\in\A_{\binom{n-1}{d}-1}$ and $s\ge \binom{n-1}{d}$. \\
On the other hand, the set $\A_{\binom{n-1}{d}}$ consists of all the unions of $\binom{n-1}{d}+1$ supports of distinct monomial generators of $I_{n,d}$. \\
Suppose that at least one of these unions is not $[n]$, \emph{i.e.}, $s>\binom{n-1}{d}$. Hence, there exist $\binom{n-1}{d}+1$ distinct monomials $u_1,\dots,u_{\binom{n-1}{d}+1}\in G(I_{n,d})$ and an integer $i\in[n]$ such that
$$
\bigcup_{i=1}^{\binom{n-1}{d}+1}\supp(u_i)=[n]\setminus\{i\}.
$$
Without loss of generality,  
we can assume $i=n$. Thus, there exist $\binom{n-1}{d}+1$ different subsets of cardinality $d$ of the set $[n-1]$. An absurd. Therefore $s=\binom{n-1}{d}$.\\
Finally, we have $\pd(I_{n,d})=n-d\le\binom{n-1}{d}$ and the inequality in Theorem \ref{thm:mainteorragtspread} is verified. One can observe that the equality holds if and only if $d=1,n-1,n$. 
\end{Expl}

\section{Regularity of $t$--spread ideals}\label{sec:3}

In this section, we analyze the Castelnuovo--Mumford regularity of $t$--spread ideals, $t\ge 1$. Moreover, we identify a special class of $t$--spread ideals for which the bounds given in Theorem \ref{thm:mainteorragtspread} are reached.\\

Firstly, we rewrite a result on squarefree monomial ideals generated by a squarefree monomial regular sequence \cite{Ei, YP}. Our techniques are completely different from the classical ones and manipulate sets of squarefree monomials.

\begin{Thm}\label{thm:teorpart}
Let $n$ be a positive integer and let $I$ be a squarefree ideal of $S=K[x_1,\dots,x_n]$ generated by a squarefree monomial regular sequence. 
Set $\A_0=\{\supp(u): u \in G(I)\}$.
Then
\begin{enumerate}
\item[\em(a)] $\pd(I) =\vert \A_0\vert -1$,
\item[\em(b)] $\reg(I) = \Big\vert \bigcup_{u\in G(I)}\supp (u)\Big\vert -(|\A_0| -1)$.
\end{enumerate}
\end{Thm}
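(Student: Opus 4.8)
The plan is to recognize that a squarefree monomial regular sequence $u_1,\dots,u_p$ has the crucial property that the supports $\Omega_i=\supp(u_i)$ are \emph{pairwise disjoint}: if $x_k$ divided both $u_i$ and $u_j$, then $u_j$ would be a zerodivisor on $S/(u_i)$ after inverting the complementary variables, contradicting regularity. (More precisely, a regular sequence of monomials must have disjoint supports, since $\lcm(u_i,u_j)/u_i$ shares the variable $x_k$ with $u_i$.) With this observation in hand, everything else follows from the machinery already set up in Theorem \ref{thm:mainteorragtspread}.

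\medskip

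First I would establish the disjointness of the $\Omega_i$. Then, as already noted in the paragraph preceding Example \ref{ex:sets}, when the sets in $\A_0$ are pairwise disjoint the support index is $s=p-1=|\A_0|-1$: indeed $\A_{p-1}=\{\Omega\}$ trivially, while for $i<p-1$ any union of $i+1$ of the $\Omega_\ell$ is a \emph{proper} subset of $\Omega$ (it omits the remaining supports, which are nonempty and disjoint from it). So $s=|\A_0|-1$. Next, because the supports are disjoint, the Taylor resolution $\mathbb{T}$ of $I$ is actually \emph{minimal}: each differential $\partial_i$ has entries $\lcm(u_j:j\in F)/\lcm(u_j:j\in F\setminus\{k\})=u_k$, which lies in $\mathfrak m$, so $\im(\partial_i)\subseteq\mathfrak m T_{i-1}$. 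Hence $\beta_{i,j}(I)=\mathrm b_{i,j}$ for all $i,j$, and the inequalities of Theorem \ref{thm:mainteorragtspread} become equalities.

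\medskip

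For part (a): the Taylor complex has length $p$ (top module $T_p\neq 0$ since $\beta_{p}(I)=\mathrm b_{p}\neq0$ by minimality), so $\pd(I)=p=|\A_0|-1$. Alternatively one invokes the upper bound $\pd(I)\le s=|\A_0|-1$ from Theorem \ref{thm:mainteorragtspread}(a) together with the fact that $\beta_{p,|\Omega|}(I)=\mathrm b_{p,|\Omega|}=1\neq 0$ forces $\pd(I)\ge p$. For part (b): by minimality of $\mathbb T$, the regularity is read directly off the shifts. The module $T_i=\bigwedge^i T_1$ has basis elements $\mathbf e_F$ of degree $|\bigcup_{\ell\in F}\Omega_\ell|=\sum_{\ell\in F}|\Omega_\ell|$, so the only shift contributing $j-i$ with $j=|\Omega|$ occurs at homological degree $i=p$ (the full union is attained only by $F=[p]$). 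Thus $\reg(I)=|\Omega|-(p-1)=\bigl|\bigcup_{u\in G(I)}\supp(u)\bigr|-(|\A_0|-1)$; equivalently this is $\cosi(I)+1$ with $w=p$ (one needs all $p$ generators to realize the total lcm), matching Theorem \ref{thm:mainteorragtspread}(b) with equality.

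\medskip

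The main obstacle, such as it is, is the clean justification that a squarefree monomial regular sequence has pairwise disjoint supports; once that is in place the rest is bookkeeping with the Taylor complex, which the paper has already prepared. One should also double-check the edge case $p=1$ (a single monomial, trivially a regular sequence): there $|\A_0|-1=0=\pd(I)$ and $\reg(I)=\deg u=|\Omega|-0$, consistent with both formulas. I would cite \cite{Ei, YP} for the classical statement and present the set-theoretic argument above as the new proof.
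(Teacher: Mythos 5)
Your proposal is correct and follows essentially the same route as the paper: both arguments reduce to reading the shifts off the minimal free resolution given by the $p$ pairwise coprime generators, since for monomials with disjoint supports the Taylor complex you use and the Koszul complex the paper invokes (via \cite[Theorem A.3.4]{JT}) are the same resolution. You do make explicit two points the paper leaves implicit, which is a small improvement: that a squarefree monomial regular sequence must have pairwise disjoint supports, and that minimality can be checked directly because every entry $\lcm(u_j:j\in F)/\lcm(u_j:j\in F\setminus\{k\})=u_k$ lies in $\mathfrak m$. One indexing slip to fix: the Taylor/Koszul resolution of $I$ (as opposed to $S/I$) has length $p-1$, so you should write $\pd(I)=p-1=|\A_0|-1$ rather than $\pd(I)=p$, and correspondingly the nonvanishing of the top Betti number forces $\pd(I)\ge p-1$, not $\ge p$; your final formulas are nonetheless the correct ones.
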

\begin{proof}
Let $G(I) = \{u_1, \ldots, u_p\}$. Set $\Omega_i = \supp (u_i)$, for $i=1, \ldots, p$, $\Omega = \bigcup_{i=1}^p\Omega_i$ and $\A_i=\big\{\Omega_{j_1}\cup\dots\cup \Omega_{j_i}\cup \Omega_{j_{i+1}}: \{j_1 < j_2 < \cdots < j_{i+1}\}\subseteq [p]\big\}$, for $i=1,\dots,p-1$. Since $I$ is generated by a squarefree monomial regular sequence, then $\A_0$ is a partition of the set $\Omega$.
Moreover, $\vert \A_0 \vert = p$.\\
(a) Since the sequence ${\bf u}=u_1,\dots,u_p$ is a regular sequence on $S$, by \cite[Theorem A.3.4]{JT} the Koszul complex $K_{_{\text{\large$\boldsymbol{\cdot}$}}}({\bf u};S)$ is a minimal free $S$--resolution of $S/I=S/({\bf u})$.\\
Let $F$ be a free $S$--module with basis $e_1,\dots,e_p$ such that each element $e_i$ has degree $\deg(e_i)=\deg(u_i)=|\Omega_i|$. 
Hence, a minimal graded free $S$--resolution of $I$ is
$$
\mathbb{F}:0\rightarrow{\bigwedge}^p F\rightarrow{\bigwedge}^{p-1}F\rightarrow\cdots\rightarrow{\bigwedge}^1F\rightarrow I\rightarrow0
$$
and  $\pd(I) =p-1 = \vert \A_0\vert -1$. \\
(b) For each $i=0,\dots,p-1$, a basis of the free $S$--module ${\bigwedge}^{i+1}F$ in the resolution $\mathbb{F}$ is given by the wedge products ${\bf e}_T=e_{j_1}\wedge\dots\wedge e_{j_i}\wedge e_{j_{i+1}}$, for all $T=\{j_1<j_2<\dots<j_{i+1}\}\subseteq [p]$. The basis element ${\bf e}_T$ has degree
$$
\deg({\bf e}_T)=\sum_{\ell=1}^{i+1}\deg(e_{j_\ell})=\sum_{\ell=1}^{i+1}|\Omega_{j_{\ell}}|=\bigg|\bigcup_{\ell=1}^{i+1}\Omega_{j_\ell}\bigg|,
$$
where the last equality follows from the fact that $\Omega_{j_1},\dots,\Omega_{j_{i+1}}$ are pairwise disjoint. Thus
\begin{equation}\label{eq:RegPrecisaUnioni}
\reg(I) = \vert \Omega\vert -(p-1) = |\Omega_1\cup\dots\cup \Omega_p| -(p-1).
\end{equation}
\end{proof}

\begin{Expl}
\rm Let $S=\QQ[x_1, \ldots, x_{8}]$ and consider the following squarefree ideal $I=(x_8,x_1x_2, x_3x_4x_5x_7)$ of $S$. Set $\Omega_1 =\supp(x_8)=\{8\}$, $\Omega_2=\supp(x_1x_2)=\{1,2\}$ and $\Omega_3=\supp(x_3x_4x_5x_{7})=\{3,4,5,7\}$. Let $\A_0=\{\Omega_1, \Omega_2, \Omega_3\}$. With the same notations as in Section \ref{sec:2}, we have
\begin{eqnarray*}
	\A_0&=&   \big\{\{1,2\}, \{8\}, \{3,4,5,7\}\big\}, \\
	\A_1&=&   \big\{\{1,2,8\}, \{3,4,5,7,8\},\{1,2,3,4,5,7\}\big\},\\
	\A_2&=&   \big\{\{1,2,3,4,5,7,8\}\big\}.
\end{eqnarray*}
Hence, $s=2$ is the support index of the ideal $I$. In such a case, $s=\vert G(I)\vert -1$. By Theorem \ref{thm:teorpart}, $\pd(I)=|\A_0|-1=2$ and $\reg(I)=|\Omega_1\cup \Omega_2\cup \Omega_3|-(|\A_0|-1)=7-(3-1)=5$.
\end{Expl}

Now, we are in position to prove the bound for the regularity of $t$--spread ideals. We assume $n \ge t$. Indeed, if $n < t$, then the only t--spread monomials are simply variables.
\begin{Thm}\label{thm:maximumregtspread}
Let $t\ge 1$ and $I$ a $t$--spread ideal of $S=K[x_1,\dots,x_n]$ with $n\ge t$. Then $$
\reg(I)\ \le\ n-(t-1).
$$
\end{Thm}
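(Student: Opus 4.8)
The plan is to bound $\reg(I)$ by combining Theorem~\ref{thm:mainteorragtspread}(b) with an upper estimate for the degree of a $t$--spread monomial. By Theorem~\ref{thm:mainteorragtspread}(b) we have $\reg(I)\le\cosi(I)+1$, and writing $G(I)=\{u_1,\dots,u_p\}$ and $\Omega=\bigcup_{i=1}^p\supp(u_i)$, the proof of that theorem shows $\cosi(I)+1=|\Omega|-(\ell-1)$ where $\ell$ is the smallest number of generators whose least common multiple equals $\lcm(u_1,\dots,u_p)$. In particular $\cosi(I)+1\le |\Omega|-(w-1)$ where $w\ge 1$, and crucially $\lcm(u_1,\dots,u_p)=\mathbf{x}_\Omega$ is itself a $t$--spread monomial (a product of distinct variables whose supports of the generators are $t$--spread). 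So it suffices to show that any $t$--spread monomial $v$ supported on a subset of $[n]$ has $\deg(v)\le n-(t-1)$, since then $\reg(I)\le\cosi(I)+1\le\deg\lcm(u_1,\dots,u_p)\le n-(t-1)$.

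The key combinatorial estimate is: if $v=x_{i_1}x_{i_2}\cdots x_{i_d}$ with $1\le i_1<i_2<\cdots<i_d\le n$ and $i_{j+1}-i_j\ge t$ for all $j$, then $d\le n-(t-1)$. First I would observe that the gaps give
\[
i_d-i_1\ =\ \sum_{j=1}^{d-1}(i_{j+1}-i_j)\ \ge\ t(d-1).
\]
Since $1\le i_1$ and $i_d\le n$, we get $n-1\ge i_d-i_1\ge t(d-1)$, hence $d-1\le (n-1)/t$, so $d\le 1+(n-1)/t$. For $t\ge 1$ one checks $1+(n-1)/t\le n-(t-1)$: indeed this is equivalent to $(n-1)/t\le n-t$, i.e. $n-1\le t(n-t)=tn-t^2$, i.e. $t^2-1\le tn-n=n(t-1)$, i.e. $(t-1)(t+1)\le n(t-1)$, which holds since $t+1\le n$ (as $n\ge t$ forces $n\ge t$, and actually one needs $n\ge t+1$; the case $n=t$ is handled separately since then any $t$--spread monomial is a single variable, $d=1=n-(t-1)$). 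This gives $\deg(v)\le n-(t-1)$, completing the argument.

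The main obstacle — more a point requiring care than a genuine difficulty — is justifying that $\cosi(I)+1\le\deg\lcm(u_1,\dots,u_p)$ directly, so that the regularity bound reduces cleanly to a statement about the degree of a single $t$--spread monomial; this follows from the proof of Theorem~\ref{thm:mainteorragtspread}(b) since $\ell\ge 1$ always. One should also double-check the edge cases $n=t$ and small $d$ (e.g. $d=1$, where the gap sum is empty), and note that when $I=0$ or $I$ is generated by a single variable the bound is trivial. Beyond that the proof is an elementary inequality chase using the defining gap condition of $t$--spread monomials.
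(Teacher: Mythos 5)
Your reduction has a fatal gap at the step where you claim that $\lcm(u_1,\dots,u_p)={\bf x}_\Omega$ is itself a $t$--spread monomial. This is false in general: the union of the supports of $t$--spread monomials need not satisfy the gap condition. For instance, with $t=2$, $n=4$ and $I=(x_1x_3,\,x_2x_4)$, both generators are $2$--spread but $\lcm(x_1x_3,x_2x_4)=x_1x_2x_3x_4$ is not, and $\deg\lcm(u_1,u_2)=4>3=n-(t-1)$, so your chain $\reg(I)\le\cosi(I)+1\le\deg\lcm(u_1,\dots,u_p)\le n-(t-1)$ breaks at the last link. (The theorem of course still holds for this $I$: here $\reg(I)=3$ and indeed $\cosi(I)+1=4-2+1=3$.) The underlying problem is that by estimating $\cosi(I)+1=\deg\lcm(u_1,\dots,u_p)-(w-1)$ using only $w\ge1$, you discard exactly the term that makes the bound work: for $t$--spread ideals the number $w$ of generators needed to realize the full lcm is forced to be large precisely when $\deg\lcm(u_1,\dots,u_p)$ is large, and the whole content of the theorem is this trade--off.

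The paper's proof quantifies that trade--off rather than bounding the lcm degree alone. It shows by induction on $n$ (dividing out $x_n$ from the generators it appears in) that at least $t$ $t$--spread monomials are needed for their supports to cover $[n]$, and that when exactly $t$ suffice the supports must be the congruence classes modulo $t$; the resulting extremal ideal (the Pascal ideal) is generated by a regular sequence, so its regularity is computed exactly as $n-(t-1)$ via the Koszul resolution (Theorem~\ref{thm:teorpart}). Your elementary estimate $\deg v\le 1+(n-1)/t\le n-(t-1)$ for a single $t$--spread monomial $v$ is correct, but it bounds the wrong object. To repair your argument you would need the covering statement (any family of $t$--spread monomials whose supports have union $\Omega$ has at least $|\Omega|-n+t$ members, or at least the case $\Omega=[n]$ together with a reduction to it), which is the genuinely new content of the paper's Step~1 and is absent from your proposal.
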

\begin{proof} Let us denote by $\mathcal T$ the class of all $t$--spread ideals of $S=K[x_1,\dots,x_n]$. 
The proof consists of two steps.\\
\textsc {Step 1.} There exists a $t$--spread ideal $J$ of $S$ such that $\reg (J) \ge \reg(I)$, for all $I\in \mathcal T$.\\
\textsc {Step 2.} $\reg(J) = n-(t-1)$.\\\\
\textsc {Step 1.} 
Let us consider the ideal $J\in \mathcal T$ such that $\bigcup_{u\in G(J)}\supp(u) = [n]$ and with the minimum possible number of generators.  \\
\emph{Claim 1.} $\vert G(J)\vert = t$.\\
To prove this, we use the \textit{least criminal} technique \cite[Proposition 1.1]{JJR}. 
For $n=t$, the minimum possible number of generators of a $t$--spread ideal $H$ for which the union of the supports is $[n]=[t]$ is $t$, \emph{i.e.},  $G(H) = \{x_1,x_2,\dots,x_t\}$.\\
Let  $n>t$. Suppose that, for some $n$, there exists a $t$--spread ideal $H$ with less than $t$ generators. Let $G(H)=\{u_1,\dots,u_p\}$ with $p<t$ and $\bigcup_{i=1}^p\supp(u_i)=[n]$. Let $\bar n$ be the minimal possible integer which satisfies such a property. We have $\bar n\ge 1+t$. For all $i=1,\dots,p$, let us consider the monomials
\begin{equation}\label{eq:formvi}
v_i\ =\ \begin{cases}
\hfil u_i&\text{if}\ \bar n\notin\supp(u_i),\\
\hfil u_i/x_{\bar n}&\text{if}\ \bar n\in\supp(u_i).
\end{cases}
\end{equation}
The ideal $\overline{H}=(v_1,\dots,v_p)\subseteq K[x_1,\dots,x_{\bar n-1}]$ is such that $\bigcup_{i=1}^p\supp(v_i)=[\bar n-1]$. It is an absurd by the meaning of $\bar n$. The claim follows.\\\\
\emph{Claim 2.} If $u_1,u_2,\dots,u_t$ are $t$--spread monomials such that $\bigcup_{i=1}^t\supp(u_i)=[n]$, then
\begin{equation}\label{eq:decompind}
\supp(u_i)\ =\ \big\{\ell\in[n]:\ell\equiv n-(i-1)\ (\text{mod}\ t)\big\},
\end{equation}
for $i=1,\dots,t$.\\
We proceed by induction on $n\ge t$. For $n=t$, it follows easily. \\
Suppose $n\ge 1+t$.  
We have to show that condition (\ref{eq:decompind}) holds for all $i=1,\dots,t$. Let us consider the $t$--spread monomials $v_i$ described in (\ref{eq:formvi}). The $t$--spread monomials $v_1,\dots,v_t$ are such that $\supp(v_1)\cup\dots\cup\supp(v_t)=[n-1]$. By induction, after a suitable reindexing, we may suppose that
$$
\supp(v_i)\ =\ \big\{\ell\in[n-1]:\ell\equiv n-1-(i-1)\ (\text{mod}\ t)\big\},
$$
for $i=1,\dots,t$. For all $i$, we have $\supp(u_i)=\supp(v_i)$ or $\supp(u_i)=\supp(v_i)\cup\{n\}$. The second possibility occurs if and only if $u_i=v_ix_n$ is $t$--spread, \emph{i.e.}, if and only if $\max(v_i)\le n-t$. This is true if and only if $i=t$. Therefore,
\begin{align*}
\supp(u_t)&\ =\ \supp(v_t)\cup\{n\}\\&\ =\ \big\{\ell\in[n-1]:\ell\equiv n-1-(t-1)\ (\text{mod}\ t)\big\}\cup\{n\}\\
&\ =\ \big\{\ell\in[n-1]:\ell\equiv n\ (\text{mod}\ t)\big\}\cup\{n\}\\
&\ =\ \big\{\ell\in[n]:\ell\equiv n\ (\text{mod}\ t)\big\},
\end{align*}
and $\supp(u_i)=\supp(v_i)$, for $i=1,\dots,t-1$. After a suitable reindexing,  the property in (\ref{eq:decompind}) is true for $i=1,\dots,t$. The claim follows. \\
Finally, from Theorem \ref{thm:mainteorragtspread}, it is clear that $J=(u_1,u_2,\dots,u_t)$ is the ideal of $\mathcal T$ with maximum regularity.\\\\
\textsc{Step 2.} 
Let us consider the ideal $J$ constructed in \textsc{Step 1}.
Setting, $\Omega_i = \supp(u_i)$, since the sets $\Omega_i$ are pairwise disjoint, \emph{i.e.}, $u_1, \ldots, u_p$ is a regular sequence, 
by Theorem \ref{thm:teorpart}, we have $$\reg(J)=|\Omega|-(t-1)=|[n]|-(t-1)=n-(t-1).$$
\end{proof}

Recall that the \textit{Alexander dual} of a simplicial complex $\Delta$ is the simplicial complex: $\Delta^\vee=\big\{[n]\smallsetminus F:F\notin\Delta\big\}$. 
\begin{Cor}
	Let $I$ be a $t$--spread monomial ideal of $S=K[x_1,\dots,x_n]$, and let $\Delta$ be the unique simplicial complex on $[n]$ such that $I_\Delta=I$. If $\Delta^\vee$ is the Alexander dual of $\Delta$, then $\pd(I_{\Delta^\vee})\le n-t$ and $\depth(I_{\Delta^\vee})\ge t$.
\end{Cor}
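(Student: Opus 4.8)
The plan is to reduce the statement to Theorem \ref{thm:maximumregtspread} by means of Terai's Alexander duality formula together with the Auslander--Buchsbaum equality, so that essentially nothing new has to be proved. Since $t\ge 1$, the ideal $I$ is squarefree, hence $I=I_\Delta$ for a unique simplicial complex $\Delta$ on $[n]$; the ideal $I_{\Delta^\vee}$ is again a Stanley--Reisner ideal, and as in Theorem \ref{thm:maximumregtspread} we assume $n\ge t$ (otherwise $n-t<0$ and there is nothing to bound).

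First I would invoke Terai's formula (see \cite{JT}), which in one of its forms reads $\reg(I_{\Delta^\vee})=\pd(S/I_\Delta)$; applying it to $\Delta^\vee$ and using $(\Delta^\vee)^\vee=\Delta$ gives
\[
\pd\bigl(S/I_{\Delta^\vee}\bigr)\ =\ \reg(I_\Delta)\ =\ \reg(I).
\]
Combining this with the bound $\reg(I)\le n-(t-1)$ of Theorem \ref{thm:maximumregtspread} and with the shift $\pd(I_{\Delta^\vee})=\pd(S/I_{\Delta^\vee})-1$ recorded in Section \ref{sec:1}, one obtains at once $\pd(I_{\Delta^\vee})\le n-t$, which is the first assertion.

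For the depth estimate I would apply the Auslander--Buchsbaum formula to the finitely generated module $S/I_{\Delta^\vee}$ over the polynomial ring $S$ (which has depth $n$):
\[
\depth\bigl(S/I_{\Delta^\vee}\bigr)\ =\ n-\pd\bigl(S/I_{\Delta^\vee}\bigr)\ \ge\ n-(n-t+1)\ =\ t-1 .
\]
Then, from the short exact sequence $0\to I_{\Delta^\vee}\to S\to S/I_{\Delta^\vee}\to 0$ and the depth lemma, $\depth I_{\Delta^\vee}=\depth(S/I_{\Delta^\vee})+1$ (here $I_{\Delta^\vee}$ is a nonzero proper ideal), so that $\depth(I_{\Delta^\vee})\ge t$, as claimed.

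The whole argument is bookkeeping once the two classical tools are quoted; the only place where a slip could occur is in pinning down the correct direction of Terai's formula and the two elementary shifts $\pd(I)=\pd(S/I)-1$ and $\depth(I)=\depth(S/I)+1$, so that the bound of Theorem \ref{thm:maximumregtspread} is transported to the dual ideal without an off-by-one error. I do not expect any genuine obstacle beyond this.
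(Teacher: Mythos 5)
Your proof is correct and follows essentially the same route as the paper: Terai's duality formula $\reg(I_\Delta)=\pd(S/I_{\Delta^\vee})$ combined with Theorem \ref{thm:maximumregtspread} for the projective dimension bound, and the Auslander--Buchsbaum formula for the depth bound. The only (cosmetic) difference is that the paper applies Auslander--Buchsbaum directly to the module $I_{\Delta^\vee}$ rather than to $S/I_{\Delta^\vee}$ followed by the shift $\depth(I_{\Delta^\vee})=\depth(S/I_{\Delta^\vee})+1$.
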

\begin{proof}
	By \cite{TER}, $\reg(I)=\reg(I_\Delta)=\pd(S/I_{\Delta^\vee})=\pd(I_{\Delta^\vee})+1$. Hence $\pd(I_{\Delta^\vee})\le n-t$, by Theorem \ref{thm:maximumregtspread}. Moreover, from the Auslander--Buchsbaum formula, we have
	$$
	\pd(I_{\Delta^\vee})\ =\ \depth(S)-\depth(I_{\Delta^\vee})\ =\ n-\depth(I_{\Delta^\vee})\ \le\ n-t.
	$$
Finally,  $\depth(I_{\Delta^\vee})\ge t$.
\end{proof}
\vspace{0,5cm}
%

We close this section discussing the $t$--spread ideal $J$ introduced in the proof of Theorem \ref{thm:maximumregtspread} and some related properties. As we have pointed out, $J$ is a special $t$--spread ideal which \emph{reachs} the upper bound for the Castelnuovo--Mumford regularity given in Theorem \ref{thm:maximumregtspread}. \\

Given a pair of positive integers $(n, t)$, with $n\ge t$, let us consider the $t$--spread monomials defined in (\ref{eq:decompind}), \emph{i.e.}, 
\begin{equation}\label{eq:pascal}
{\bf x}_{n,t,i}\ =\ \prod_{\substack{j\equiv i\ (\text{mod}\ t) \\ 1\le j\le n}}x_j, \qquad i\in[t]=\{1,\dots,t\}.
\end{equation}

Let us define the following class of monomial ideals.
\begin{Def}\label{def:Pascal}\em
A $t$--spread ideal of $S$ is called a \textit{Pascal ideal of type} $(n,t)$ if it is generated by the monomials in (\ref{eq:pascal}).
\end{Def}

We will denote it by $I_{\textup{Pasc},n,t}$.

\begin{Expl}\rm Let $(n,t)=(10,3)$, then the Pascal ideal of type $(10,3)$ is
\begin{align*}
I_{\text{Pasc},10,3}\ =\ ({\bf x}_{10,3,1},{\bf x}_{10,3,2},{\bf x}_{10,3,3})=(x_1x_4x_7x_{10},x_2x_5x_8,x_3x_6x_9).
\end{align*}
\end{Expl}

\vspace{0,3cm}

The name of such an ideal is justified by the next remark.
\begin{Rem} \label{rem:pasc}\rm
Let $I=I_{\text{Pasc},n,t}$ be the Pascal ideal of type $(n,t)$. 
From the structure of $I$, the minimal graded free $S$--resolution of $S/I$ is given by the Koszul complex 
$K_{_{\text{\large$\boldsymbol{\cdot}$}}}({\bf x}_{n,t};S)$ attached to the regular sequence ${\bf x}_{n,t}$ $= {\bf x}_{n,t,1},$ ${\bf x}_{n,t,2},$ $\dots, $ ${\bf x}_{n,t,t}$ generating $I$, \emph{i.e.},
$$
K_{_{\text{\large$\boldsymbol{\cdot}$}}}({\bf x}_{n,t};S):0\rightarrow{\bigwedge}^t F\rightarrow{\bigwedge}^{t-1}F\rightarrow\dots\rightarrow{\bigwedge}^0F\rightarrow S/I \rightarrow0,
$$
where $F$ is the free $S$--module with basis $e_1,\dots,e_t$, and $\deg(e_i)=\deg({\bf x}_{n,t,i})$, for all $i$. Note that
$\pd(S/I)=t$, and
$$
\beta_i\big(S/I\big)\ =\ \binom{t}{i},\,\,\ \text{for all}\ i=0,\ldots,t,
$$
\emph{i.e.}, the total Betti numbers of $S/I$ are the entries of the $t$--th row of the Pascal triangle.
\end{Rem}

The next remark will be useful in the sequel.
\begin{Rem} \rm Let $n,t\ge1$. The maximum degree $\ell$ of a $t$--spread monomial in $S=K[x_1,\dots,x_n]$ can be determined as follows:
\begin{equation}\label{eq:max}
\begin{aligned}
\ell\ &=\ \max\big\{d\ge0:M_{n,d,t}\ne\emptyset\big\}\\
&=\ \max\Big\{d\ge0:\binom{n-(d-1)(t-1)}{d}\ne0\Big\}\\
&=\ \max\big\{d\ge0:n-(d-1)(t-1)\ge d\big\}\\
&=\ \left\lfloor\frac{n-1}{t}\right\rfloor+1.
\end{aligned}
\end{equation}
\end{Rem}\medskip

\begin{Prop} \label{prop:pasc}
Let $(n,t)$ be a pair of positive integers with $n\ge t$. Let $I=I_{\textup{Pasc},n,t}$ be the Pascal ideal of type $(n,t)$. Then 
\begin{enumerate}
\item[\em(a)] $S/I$ is a Cohen--Macaulay ring;
\item[\em(b)] $I$ has only one extremal Betti number. In particular, $\pd(I)=t-1,\ \reg(I)=\depth(I)=n-(t-1)$;
\end{enumerate}
Moreover, if $i\in [t]$ is such that $n\equiv i\ (\textup{mod}\ t)$, then 
\begin{enumerate}
\item[\em(c)] the $f_t$--vector of $I$ is
\vspace{-0.1em}$$
f_t(I)\ =\ \big(f_{t,0},f_{t,1},\ldots,f_{t,\lfloor\frac{n-1}{t}\rfloor+1}\big),
$$
with\vspace{-0.5em}
\begin{equation}\label{eq:ftvectorpasc}
f_{t,j}\ =\ \begin{cases}
\binom{n-(j-1)(t-1)}{j}&\text{if}\ j=0,\dots,\lfloor\frac{n-1}{t}\rfloor-1,\\[3pt]
\binom{n-(j-1)(t-1)}{j}-(t-i)&\text{if}\ j=\lfloor\frac{n-1}{t}\rfloor,\\
\binom{n-(j-1)(t-1)}{j}-i&\text{if}\ j=\lfloor\frac{n-1}{t}\rfloor+1;
\end{cases}
\end{equation}
\item[\em(d)] the Hilbert series of $S/I$ is
$$
\textup{Hilb}_{S/I}(z)\ =\ \frac{{(1+z+\dots+z^{\lfloor\frac{n-1}{t}\rfloor})}^{i}{(1+z+\dots+z^{\lfloor\frac{n-1}{t}\rfloor-1})}^{t-i}}{{(1-z)}^{n-t}};
$$
\item[\em(e)] there exists a $t$--spread lexsegment ideal $L$ of $S$ such that $f_t(L)=f_t(I)$ if and only if $i\in \{1, t-1, t\}$.
\end{enumerate}
\end{Prop}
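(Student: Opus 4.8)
The plan is to prove the five statements in order, using the explicit structure of $I=I_{\textup{Pasc},n,t}$ as an ideal generated by a regular sequence, as recorded in Remark~\ref{rem:pasc}.

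\textbf{Parts (a) and (b).} Since the generators ${\bf x}_{n,t,1},\dots,{\bf x}_{n,t,t}$ have pairwise disjoint supports partitioning $[n]$, they form a regular sequence, so by Remark~\ref{rem:pasc} the Koszul complex $K_{_{\text{\large$\boldsymbol{\cdot}$}}}({\bf x}_{n,t};S)$ is the minimal graded free $S$--resolution of $S/I$. From here $\pd(S/I)=t$, hence $\pd(I)=t-1$, and the Auslander--Buchsbaum formula gives $\depth(S/I)=n-t$. The Krull dimension of $S/I$ equals $n-\het(I)=n-t$ (a complete intersection of $t$ polynomials), so $S/I$ is Cohen--Macaulay, proving (a); then $\depth(I)=\depth(S/I)+1=n-t+1=n-(t-1)$. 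For the regularity, apply Theorem~\ref{thm:teorpart}(b) (the $\Omega_i$ being pairwise disjoint) to get $\reg(I)=|[n]|-(t-1)=n-(t-1)$; alternatively read it off the Koszul resolution, whose top homological degree $t-1$ in the resolution of $I$ sits in internal degree $\sum_{i=1}^t\deg({\bf x}_{n,t,i})=n$. That $I$ has a unique extremal Betti number follows from Definition~\ref{def:extr1}: the Koszul resolution of $S/I$ is pure in the sense that $F_i$ of the resolution of $I$ contains exactly the shift coming from sums of $i$ of the $t$ generator-degrees, and the unique nonzero Betti number with the maximal value of $j-i$ is $\beta_{t-1,n}(I)$, which dominates in all three coordinates of Definition~\ref{def:extr1}.

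\textbf{Parts (c) and (d).} For (c), note that $|[I_j]_t|$ counts the $t$--spread monomials of degree $j$ divisible by one of the ${\bf x}_{n,t,\ell}$. A $t$--spread monomial of degree $j$ has support $\{a_1<a_2<\cdots<a_j\}$ with consecutive gaps $\ge t$; it is divisible by ${\bf x}_{n,t,\ell}$ precisely when its support contains \emph{every} index congruent to $\ell$ modulo $t$. When $j<\lfloor\frac{n-1}{t}\rfloor$ this is impossible for any $\ell$ (there are more than $j$ residue-$\ell$ indices for each $\ell$ once $j$ is below the threshold), so $[I_j]_t=\emptyset$ and $f_{t,j}=|M_{n,j,t}|=\binom{n-(j-1)(t-1)}{j}$. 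At the two top values $j=\lfloor\frac{n-1}{t}\rfloor$ and $j=\lfloor\frac{n-1}{t}\rfloor+1$ one counts exactly which ${\bf x}_{n,t,\ell}$ are themselves $t$--spread monomials of that degree: using $n\equiv i\pmod t$, precisely $i$ of the generators have degree $\lfloor\frac{n-1}{t}\rfloor+1$ and $t-i$ have degree $\lfloor\frac{n-1}{t}\rfloor$ (by counting the size of each residue class in $[n]$). At the maximal degree $\lfloor\frac{n-1}{t}\rfloor+1$ the only $t$--spread monomials present are the $i$ top-degree generators themselves (since by \eqref{eq:max} there is essentially no room to move), giving $f_{t,\lfloor\frac{n-1}{t}\rfloor+1}=\binom{n-(j-1)(t-1)}{j}-i$; at degree $\lfloor\frac{n-1}{t}\rfloor$ the monomials in $I$ of that degree are exactly the $t-i$ generators of that degree (a divisibility check shows no other degree-$\lfloor\frac{n-1}{t}\rfloor$ $t$--spread monomial is a multiple of a smaller-degree generator, as all generators have degree $\ge\lfloor\frac{n-1}{t}\rfloor$), giving the stated value. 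For (d), since $I=({\bf x}_{n,t,1},\dots,{\bf x}_{n,t,t})$ is generated by a regular sequence, the Hilbert series factors as
$$
\textup{Hilb}_{S/I}(z)=\frac{\prod_{\ell=1}^t\big(1-z^{\deg({\bf x}_{n,t,\ell})}\big)}{(1-z)^n};
$$
substituting $1-z^{d}=(1-z)(1+z+\cdots+z^{d-1})$, using that $i$ of the degrees equal $\lfloor\frac{n-1}{t}\rfloor+1$ and $t-i$ equal $\lfloor\frac{n-1}{t}\rfloor$, and cancelling $(1-z)^t$ against the denominator yields the displayed formula.

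\textbf{Part (e).} This is the real obstacle. By \cite[Theorem~2.1]{CAC} the $f_t$--vectors realised by $t$--spread lexsegment ideals are exactly those arising from $t$--spread \emph{strongly stable} ideals, so the task reduces to deciding for which $i$ the $f_t$--vector in \eqref{eq:ftvectorpasc} is the $f_t$--vector of some $t$--spread strongly stable ideal (equivalently, satisfies the $t$--Kruskal--Katona inequalities of \cite{CAC}, i.e. the numbers $f_{t,j}$ are compatible with the growth condition on $t$--shadows). The plan is: first, observe $I$ itself is $t$--spread strongly stable exactly when the three residue classes configuration forces no obstruction — and a direct check shows the generator ${\bf x}_{n,t,\ell}$ can be transformed by a strongly-stable move into a non-multiple of $I$ unless $\ell\in\{1,t-1,t\}$ governs the top; more precisely, one verifies that the complement $M_{n,j,t}\setminus[I_j]_t$ is a strongly stable set for all $j$ precisely when $i\in\{1,t-1,t\}$. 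Concretely, for $i=t$ (so $t\mid n$) all generators have the same degree $n/t$ and $I=I_{\text{Pasc}}$ is already a complete-intersection lex-type ideal; for $i=1$ and $i=t-1$ one exhibits an explicit $t$--spread lexsegment ideal with the prescribed $f_t$--vector by taking the lexsegment of the appropriate size in each of the top two degrees and checking via the $t$--Kruskal--Katona bound of \cite{CAC} that its shadow has exactly the right cardinality. For the converse, when $2\le i\le t-2$, one shows the pair of values $\big(f_{t,\lfloor\frac{n-1}{t}\rfloor},f_{t,\lfloor\frac{n-1}{t}\rfloor+1}\big)=\big(|M_{n,j,t}|-(t-i),\,|M_{n,j+1,t}|-i\big)$ violates the compression/shadow inequality: the $t$--shadow of the complement of any lexsegment of size $|M_{n,j+1,t}|-i$ in degree $j+1$ is strictly larger than $t-i$, so no lexsegment ideal can have this $f_t$--vector. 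Carrying out this last numerical comparison — unwinding the $t$--binomial expansions from \cite{CAC} for the two critical degrees — is where the bulk of the work lies, and the case analysis on $i$ against $\{1,t-1,t\}$ is the delicate point; everything else is bookkeeping with the Koszul complex.
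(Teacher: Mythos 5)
Parts (a)--(d) of your proposal are correct and follow essentially the same route as the paper: regular sequence $\Rightarrow$ complete intersection $\Rightarrow$ Koszul resolution, Auslander--Buchsbaum, Theorem \ref{thm:teorpart} for $\pd$ and $\reg$, the degree count of the generators by residue classes for the $f_t$--vector, and the standard complete--intersection factorization of the Hilbert series. The only soft spot there is the phrase ``essentially no room to move'' in (c); the paper makes this precise by observing that $\Shad_t(\{{\bf x}_{n,t,i+1},\dots,{\bf x}_{n,t,t}\})=\emptyset$, which is a one--line check, so this is acceptable.

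Part (e) is where the proposal fails. First, one of your intermediate claims is false: the Pascal ideal $I$ is \emph{never} $t$--spread strongly stable for $t\ge2$ and $n>t$ (e.g.\ $x_2x_5x_8\in I_{\textup{Pasc},10,3}$ but $x_1x_5x_8\notin I_{\textup{Pasc},10,3}$), regardless of the value of $i$; the paper even remarks this explicitly after the proposition. So the dichotomy ``$I$ is strongly stable iff $i\in\{1,t-1,t\}$'' cannot be the organizing principle, and the related assertion about $M_{n,j,t}\setminus[I_j]_t$ being a strongly stable set is not the right condition either. Second, and more importantly, the decisive step is never carried out: you reduce (correctly in spirit) to checking whether the unique lexsegment candidate $L$ with $|[L_k]_t|=t-i$ in degree $k=\lfloor\frac{n-1}{t}\rfloor$ satisfies $|\Shad_t([L_k]_t)|=i$, but you defer exactly this computation (``where the bulk of the work lies''), and the inequality you state for the converse (``strictly larger than $t-i$'') has the wrong quantity: what must be shown is that $|\Shad_t([L_k]_t)|\ge i$ always, with equality precisely when $\min\{i,t-i\}=1$. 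The paper's proof consists precisely of this computation: it lists the monomials of $\mathcal{V}=\Shad_t([L_k]_t)$ explicitly, obtains $|\mathcal{V}|=\sum_{j=0}^{\min\{i,t-i\}-1}(i-j)$, and reads off that this equals $i$ iff $i=1$ or $i=t-1$ (the case $i=t$ being handled separately by a degree--$(k+1)$ lexsegment). Without that count, neither the existence for $i\in\{1,t-1\}$ nor the non-existence for $2\le i\le t-2$ is established.
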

\begin{proof}
If $n=t$, then $I=\mathfrak{m}=(x_1,\dots,x_t)$ and all the statements follow.\\\\
Let $n\ge 1+t$. \\
(a) Since $I$ is generated by a regular sequence, then $I$ is  complete intersection and the assertion follows.\\
(b) From (a), $I$ has a unique extremal Betti number \cite[Theorem 2.16]{HHO}. Furthermore,
$\pd(I)=t-1$ (Theorem \ref{thm:teorpart}), and from the Auslander--Buchsbaum formula and Theorem \ref{thm:maximumregtspread}, 
we have $$\depth(I)=n-\pd(I)=n-(t-1) = \reg(I).$$ 
(c) Let $i\in [t]$ such that $n\equiv i\ (\textup{mod}\ t)$. We can write $n=i+kt$, with $k=\frac{n-i}{t}$.

Since $0\le i-1\le t-1$, we have
$$
k=\frac{kt}{t}+\left\lfloor\frac{i-1}{t}\right\rfloor=\left\lfloor\frac{i-1+kt}{t}\right\rfloor=\left\lfloor\frac{n-1}{t}\right\rfloor.
$$
Hence
\begin{align}\label{eq:degxnti}
\nonumber \deg({\bf x}_{n,t,j})\ &=\ \begin{cases}
\big|\{j,j+t,\dots,j+kt\}\big|&\text{if}\,\, \,j=1,\dots,i,\\
\big|\{j,j+t,\dots,j+(k-1)t\}\big|&\text{if}\,\,\, j=i+1,\dots,t.
\end{cases}\\
&=\ \begin{cases}
\left\lfloor\frac{n-1}{t}\right\rfloor+1&\text{if}\ j=1,\dots,i,\\
\left\lfloor\frac{n-1}{t}\right\rfloor&\text{if}\ j=i+1,\dots,t.
\end{cases}
\end{align}\smallskip
One can observe that $\left\lfloor\frac{n-1}{t}\right\rfloor+1$ is the maximum degree of a monomial generator of the Pascal ideal $I$. Hence, setting $r=|[I_{k+1}]_t|$ and $s=|[I_{k}]_t|$, then $r+s=t=\left|G(I)\right|$. 
Moreover, by the pairwise disjointness of the supports of these monomials, we have that $r(k+1)+s k=n$ which is the number of the indeterminates of $S$. Solving the linear system
\[r+s=t,\quad r(k+1)+s k=n,\]
one obtains $s=t-r$ and $r+kt=n$. Recalling that, by hypothesis,  $n=i+k t$, with $1\leq i\leq t$, one has that $r=i$ and $s=t-i$.\\
Since $I_\ell=0$, for $0\le\ell\le\left\lfloor\frac{n-1}{t}\right\rfloor-1$, and $\Shad_t(\{{\bf x}_{n,t,i+1},\dots,{\bf x}_{n,t,t}\})=\emptyset$, 
(c) follows. \\
(d) Since $I=({\bf x}_{n,t})$ and ${\bf x}_{n,t}={\bf x}_{n,t,1},\ldots,{\bf x}_{n,t,t}$ is a regular sequence, from (\ref{eq:degxnti}) and \cite[Problem 6.2]{JT}, we have 
\begin{align*}
\textup{Hilb}_{S/I}(z)\ &=\ \frac{\prod_{u\in G(I)}(1+z+\ldots+z^{\deg(u)-1})}{(1-z)^{n-|G(I)|}}\\ &=\ \frac{{(1+z+\ldots+z^{\lfloor\frac{n-1}{t}\rfloor})}^{i}{(1+z+\ldots+z^{\lfloor\frac{n-1}{t}\rfloor-1})}^{t-i}}{{(1-z)}^{n-t}},
\end{align*}
and (d) holds.\\
(e) Assume that $M_{n,k,t}$ is endowed with the squarefree lex order.  Let $i\in [t]$ such that $n=i+kt$. We have already noted that $k=\left\lfloor\frac{n-1}{t}\right\rfloor$.\\
Our aim is to determine a $t$--spread lexsegment ideal $L$ of $S$ such that $f_t(L)=f_t(I)$. \\
Assume $i=t$, then
\[I = \Big(\prod_{j=0}^{k}x_{1+jt},\prod_{j=0}^{k}x_{2+jt},\dots,\prod_{j=0}^{k}x_{t+jt}\Big),\]
and 
\[L=\ \Big(\prod_{j=0}^{k}x_{1+jt},\Big(\prod_{j=0}^{k-1}x_{1+jt}\Big)x_{2+kt},\dots,\Big(\prod_{j=0}^{k-1}x_{1+jt}\Big)x_{t+kt}\Big)\]
 is the $t$--spread lexsegment ideal we are looking for. \\
In fact, the Pascal ideal $I$ has exactly $t$ generators of degree $k+1=\left\lfloor\frac{n-1}{t}\right\rfloor+1$. Hence, in order to get the desired $t$--spread lexsegment ideal, we need to take the greatest $t$--spread monomials of $M_{n,k+1,t}$ with respect to the squarefree lex order.\\\\
Now, assume $1\le i\le t-1$. By (\ref{eq:degxnti}), we have that
\[I= \Big(\prod_{j=0}^{k}x_{1+jt},\dots,\prod_{j=0}^{k}x_{i+jt},\prod_{j=0}^{k-1}x_{(i+1)+jt},\dots,\prod_{j=0}^{k-1}x_{t+jt}\Big).\]
A possible candidate for $L$ is the following $t$--spread lexsegment ideal of $S$:
\[L= \Big(\prod_{j=0}^{k-1}x_{1+jt},\Big(\prod_{j=0}^{k-2}x_{1+jt}\Big)x_{2+(k-1)t},\dots,\Big(\prod_{j=0}^{k-2}x_{1+jt}\Big)x_{t-i+(k-1)t}\Big).\]
As we have observed previously, $I$ has $i$ generators of degree $k+1$ and $t-i$ generators of degree $k$. In order to have $f_t(I) = f_t(L)$, we must take into account the largest $t-i$ monomials of $M_{n,k,t}$. Since $t-i<n$, then we must consider all monomials in $M_{n,k,t}$ obtained starting from $\prod_{j=0}^{k-1}x_{1+jt}$ which is the largest monomial of $M_{n,k,t}$. Then one have to fix the first $k-1$ indeterminates, 
and replace $x_{1+(k-1)t}$ with $x_q$ for $q\in \{2+(k-1)t, \ldots, t-i+(k-1)t\}$. One can observe that at least $i$ monomials of degree $k+1$ belong to the shadow of the ones of degree $k$ already built. Proceeding in such a way, we cover all the minimal generators of the Pascal ideal.\\\\
Let us analyze the conditions under which  $f_t(I) = f_t(L)$. \\
As we have said so far, it is clear that $f_{t,j}(L)=f_{t,j}(I)$, for all $j=1,\dots,k=\left\lfloor\frac{n-1}{t}\right\rfloor$. \\
Setting $\mathcal{V}=\Shad_t(L_k)$, by (d) and by the definition of $f_t$--vector,
\small\begin{align}\label{eq:fvecpasc}
 \nonumber f_{t,k+1}(I)\ &=\ \binom{n-k(t-1)}{k+1}-i,\\
&\\
\nonumber f_{t,k+1}(L)\ &=\ \binom{n-k(t-1)}{k+1}-|\mathcal{V}|.
\end{align}\normalsize
One can quickly verify that $\mathcal{V}$ consists of the following monomials:\\
\begin{align*}
\Big(\prod_{j=0}^{k-2}x_{1+jt}\Big)x_{1+(k-1)t}x_{\ell+kt},&&& \ell = 1, \ldots, i,\\
\Big(\prod_{j=0}^{k-2}x_{1+jt}\Big)x_{2+(k-1)t}x_{\ell+kt},&&& \ell  = 2, \ldots, i, \\
\vdots\ \ \ \ \ \ \ \ \ \ \ &&&\ \ \ \ \ \ \ \ \vdots\\
\Big(\prod_{j=0}^{k-2}x_{1+jt}\Big)x_{t-i+(k-1)t}x_{\ell+kt},&&& \ell = t-i, \ldots, i. 
\end{align*}\normalsize\\
Therefore
\begin{equation}\label{|mathcal:L|equ}
|\mathcal{V}|\ =\ \begin{cases}
\sum\limits_{j=0}^{t-i-1}(i-j)&\text{if}\ t-i\le i,\\
&\\
\sum\limits_{j=0}^{i-1}(i-j)&\text{if}\ t-i>i.
\end{cases}
\end{equation}\normalsize
\vspace{0,3cm}
\par\noindent
From (\ref{eq:fvecpasc}), 
$f_{t,k+1}(I)=f_{t,k+1}(L)$ if and only if $|\mathcal{V}|=i$.
If $t-i\le i$, from (\ref{|mathcal:L|equ}), $|\mathcal{V}|=i$ if and only if $t-i-1=0$, \emph{i.e.}, $i=t-1$. Otherwise, if $t-i>i$, then $|\mathcal{V}|=i$ if and only if $i=1$. 
Finally, (e) holds.
\end{proof}
\begin{Rem} \rm We have pointed out that if $I$ is a $t$--spread ideal, then $I$ may not have a $t$--spread lex ideal with the same $f_t$--vector \cite{CAC}, whereas if $I$ is a $t$--spread strongly stable ideal, then there exists a unique $t$--lex ideal, $I^{t, \lex}$, with the same $f_t$--vector as $I$ \cite[Theorem 2.1]{CAC}. The Pascal ideal $I=I_{\text{Pasc},n,t}$ is not a $t$--spread strongly stable ideal but under certain condition (Proposition \ref{prop:pasc}, (e)) satisfies such a property. Indeed, with the same notation as in Proposition \ref{prop:pasc}, $L = I^{t, \lex}$ in the sense of \cite{CAC}.
\end{Rem}
\begin{Expl}\rm
Consider $(n,t)=(10,3)$, then
\begin{align*}
I=I_{\text{Pasc},10,3}\ =\ ({\bf x}_{10,3,1},{\bf x}_{10,3,2},{\bf x}_{10,3,3})=(x_1x_4x_7x_{10},x_2x_5x_8,x_3x_6x_9).
\end{align*}
By \emph{Macaulay2}, the Betti diagram of $S/I$ is
\begin{center}
	\begin{tabular}{cccccc}
		&&0&1&2&3\\ 
		\text{Tot}&:&1&3&3&1\\ \hline 
		0&:&1&-&-&-\\
		1&:&-&-&-&-\\
		2&:&-&2&-&-\\
		3&:&-&1&-&-\\
		4&:&-&-&1&-\\
		5&:&-&-&2&-\\
		6&:&-&-&-&-\\
		7&:&-&-&-&1\\
	\end{tabular}
\end{center}\medskip
One has $\pd(S/I)=t=3$, $\reg(S/I)=7$, $\pd(I)=t-1=2$, and $\reg(I)=n-(t-1)=8$.

Proceeding as in Proposition \ref{prop:pasc}, there exists a $t$--spread lexsegment ideal $L$ of $S$ such that $f_t(I)=f_t(L)$. It is
$L =(x_1x_4x_7,x_1x_4x_8) = I^{t, \lex}$.
Indeed, $i=1$ and condition (e) in Proposition \ref{prop:pasc} is verified.
\end{Expl}

\section{Some Applications}\label{sec:appl}

In this section we consider some applications of the previous results. In particular, we  analyze the regularity of $t$--spread ideals equigenerated in degree two by tools from the graph theory.\\

A \emph{simple graph} $G$ is an ordered pair of disjoint finite sets $(V(G), E(G))$ such that $E(G)$ is a
subset of the set of unordered pairs of $V(G)$. The set $V(G)$ is the set of \emph{vertices}
and the set $E(G)$ is called the set of \emph{edges}.

If $e = \{u, v\}$ is an edge of $G$ one says that the vertices $u$ and $v$ are \emph{adjacent}. 

A \emph{walk} of length $n$ in $G$ is an alternating sequence of vertices and edges,
written as $w = \{v_0, z_1, v_1, \ldots, v_{n-1}, z_n, v_n\}$, where $z_i = \{v_{i-1}, v_i\}$ is the edge joining the vertices $v_{i-1}$ and $v_i$. A walk may also be written $\{v_0,\ldots, v_n\}$
with the edges understood, or $\{z_1, z_2, \ldots, z_n\}$ with the vertices understood.
If $v_0 = v_n$, the walk $w$ is called a \textit{closed walk}. A \textit{path} is a walk with all its
vertices distinct. A \textit{cycle} of length $n$ is a closed path $\{v_0, \ldots, v_n\}$ in which $n\ge  3$.
A \emph{forest} is an acyclic graph.\\

To each simple graph $G$ on the vertex set $V(G)$ we can associate a squarefree ideal $I(G)$ of the polynomial ring $S=K[x_1,\dots,x_n]$, called the \textit{edge ideal} associated to $G$,  defined as follows \cite{RV}:
$$
I(G) = (x_ix_j : \textrm{$x_i$ is adjacent to $x_j$}) = (x_ix_j:\{i,j\}\in E(G)).
$$
 We quote the next definition from \cite{RV}.
\begin{Def}
 An \emph{induced matching} in a graph $G$ is a set of pairwise disjoint edges $e_1,\ldots, e_r$ such that the only edges of $G$ contained in $\bigcup_{i=1}^re_i$ are $e_1, \ldots, e_r$. The \emph{induced matching number}, denoted by $\im(G)$, is the
number of edges in the largest induced matching.
	\end{Def}
	
\begin{Prop}\label{prop:forest}\textup{\cite[Corollary 7.5.6.]{RV}} If $G$ is a forest, then
$$
\reg(I(G)) = \im(G) + 1.
$$
\end{Prop}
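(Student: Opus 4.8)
The statement is cited as \cite[Corollary 7.5.6.]{RV}, so strictly speaking one could simply invoke it; but the spirit of the paper is to reprove such facts by the set-theoretic/Taylor-complex machinery developed above, so the plan is to give a self-contained argument in that style. The target equality $\reg(I(G)) = \im(G)+1$ for a forest $G$ will be established by combining two inequalities. For the lower bound $\reg(I(G)) \ge \im(G)+1$, I would take an induced matching $e_1 = \{a_1,b_1\},\dots,e_r=\{a_r,b_r\}$ of maximum size $r = \im(G)$. The monomials $u_i = x_{a_i}x_{b_i}$ lie in $G(I(G))$, have pairwise disjoint supports, and — because the matching is \emph{induced} — no other generator of $I(G)$ divides $\lcm(u_1,\dots,u_r) = x_{a_1}x_{b_1}\cdots x_{a_r}x_{b_r}$. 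The subideal $J = (u_1,\dots,u_r)$ is then generated by a squarefree monomial regular sequence, so by Theorem~\ref{thm:teorpart}(b) we get $\reg(J) = 2r - (r-1) = r+1$. The inducedness guarantees that $J$ is the ``restriction'' of $I(G)$ to the vertex set $\bigcup_i e_i$ in the sense that no generator of $I(G)$ other than $u_1,\dots,u_r$ has support inside that set; a standard restriction/localization argument (regularity does not drop when passing to the subcomplex on a vertex subset, equivalently $\beta_{i,j}$ of a monomial ideal on a subset of variables is a summand of $\beta_{i,j}$ of the whole) then yields $\reg(I(G)) \ge \reg(J) = \im(G)+1$.

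For the upper bound $\reg(I(G)) \le \im(G)+1$, the plan is induction on the number of vertices (or edges) of $G$, which I expect to be the main obstacle. The base cases (edgeless graphs, or a single edge) are immediate since then $\reg(I(G)) \le 2 = \im(G)+1$. For the inductive step, I would pick a leaf $v$ of the forest (a vertex of degree one, which exists in any nonempty forest) with unique neighbour $w$, and use the short exact sequence attached to splitting off the variable $x_v$: writing $I = I(G)$, there is an exact sequence relating $S/I$, $S/(I : x_w x_v)$ and $S/(I, x_w x_v)$, or more cleanly the standard device $0 \to S/(I:x_w)(-1) \xrightarrow{x_w} S/I \to S/(I,x_w) \to 0$, which gives $\reg(S/I) \le \max\{\reg(S/(I:x_w))+1,\ \reg(S/(I,x_w))\}$. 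Here $(I,x_w)$ corresponds to deleting $w$ (and all incident edges), producing a smaller forest $G\setminus w$ with $\im(G\setminus w)\le\im(G)$; and $(I:x_w)$ is generated by the variables $x_u$ for $u$ adjacent to $w$ together with the edge ideal of $G\setminus N[w]$, which is again a forest with an induced matching number strictly smaller than $\im(G)$ (any induced matching of $G\setminus N[w]$ extends by the edge $\{v,w\}$ to an induced matching of $G$, using that $v$ is a leaf). Feeding the inductive hypothesis into both branches closes the estimate; the bookkeeping to verify $\im(G\setminus N[w]) \le \im(G)-1$ and $\im(G\setminus w)\le\im(G)$ is the delicate combinatorial part.

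Alternatively — and this may be the cleanest way to package it for this paper — one can bypass the full induction by noting that Theorem~\ref{thm:mainteorragtspread}(b) already gives $\reg(I(G)) \le \cosi(I(G)) + 1$, and then proving for a forest the combinatorial identity $\cosi(I(G)) = \im(G)$. Unwinding the definition of $\cosi$: $\deg\lcm(u_1,\dots,u_p)$ is the number $c$ of non-isolated vertices of $G$, and $w$ is the minimum number of edges whose union of supports is all $c$ of those vertices, i.e. the minimum size of an edge cover of $G$; so $\cosi(I(G)) = c - w = $ (non-isolated vertices) $-$ (minimum edge cover). By the Gallai-type identity, for a graph with no isolated vertices, (minimum edge cover) $+$ (maximum matching) $= c$, hence $\cosi(I(G)) = $ (maximum matching number) $\nu(G)$. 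For a \emph{forest} every maximum matching can be taken to be induced — indeed in a tree a maximum matching is induced because any edge joining two matched edges would create a path structure violating maximality or could be swapped — so $\nu(G) = \im(G)$ for forests, giving $\cosi(I(G)) = \im(G)$ and therefore $\reg(I(G)) \le \im(G)+1$. Combining with the lower bound from the previous paragraph finishes the proof. The main obstacle in this route is justifying $\nu(G) = \im(G)$ for forests (the only nontrivial combinatorial input) and carefully matching the definition of $\cosi$ with the edge-cover quantity; I would spell out the forest argument by choosing, among all maximum matchings, one whose union of supports is inclusion-minimal, and arguing it must be induced.
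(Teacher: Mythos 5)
The paper does not actually prove this statement; it only cites \cite[Corollary 7.5.6]{RV}, so your argument can only be measured against the standard literature proof. Your main line of attack is correct and is essentially that standard proof (due to Zheng, reproduced in \cite{RV}): the lower bound $\reg(I(G))\ge\im(G)+1$ by restricting to the induced subgraph on the vertices of a maximum induced matching, where inducedness guarantees the restricted ideal is exactly the complete intersection $J=(u_1,\dots,u_r)$ handled by Theorem~\ref{thm:teorpart}, and Hochster's formula gives $\beta_{i,j}(J)\le\beta_{i,j}(I(G))$; and the upper bound by induction on a leaf $v$ with neighbour $w$, using $0\to \big(S/(I:x_w)\big)(-1)\to S/I\to S/(I,x_w)\to 0$ together with the combinatorial facts $\im(G\setminus N[w])\le\im(G)-1$ (the edge $\{v,w\}$ extends any induced matching of $G\setminus N[w]$, since neither $v$ nor $w$ has a neighbour outside $N[w]$) and $\im(G\setminus w)\le\im(G)$. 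These steps are all sound.

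However, the ``alternative, cleanest'' route you propose for the upper bound fails. The identity $\cosi(I(G))=\nu(G)$, the maximum matching number, via Gallai's theorem is fine, but the claim that every forest satisfies $\nu(G)=\im(G)$ is false: for the path $P_4$ on vertices $1,2,3,4$ the unique maximum matching $\big\{\{1,2\},\{3,4\}\big\}$ is not induced (the edge $\{2,3\}$ joins its two edges), and indeed $\nu(P_4)=2$ while $\im(P_4)=1$. Your heuristic that a non-induced maximum matching in a tree ``could be swapped'' to an induced one is exactly what $P_4$ refutes. Consequently Theorem~\ref{thm:mainteorragtspread}(b) only yields $\reg(I(G))\le\nu(G)+1$, which is genuinely weaker than $\im(G)+1$ for forests (here $\reg(I(P_4))=2$, not $3$). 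So the cosize bound cannot replace the leaf-deletion induction, and you must retain your first argument for the upper bound.
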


See \cite{RV} for detailed information on this subject.

\begin{Cor}\label{cor:forest} Let $n$ be a positive integer and let $G$ be a simple graph on $[n]$. Assume
\[E(G)=
\begin{cases}
 \big\{e_i=\big\{i,i+\tfrac{n}{2}\big\}:i=1,\dots,\tfrac{n}{2}\big\} & \text{if $n$ is even}, \\\\

\big\{e_i=\big\{i,i+\left\lfloor\tfrac{n}{2}\right\rfloor\big\}:i=1,\dots,\left\lfloor\tfrac{n}{2}\right\rfloor\big\}\cup\big\{\{1,n\}\big\} & \text{if $n$ is odd}.
\end{cases} \]
	Then
	$$\reg(I(G)) = \left\lfloor\tfrac{n}{2}\right\rfloor + 1.$$
\end{Cor}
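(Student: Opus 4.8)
The plan is to derive the equality straight from Proposition \ref{prop:forest}: the two ingredients to verify are that $G$ is a forest and that its induced matching number equals $\lfloor n/2\rfloor$. Write $m=\lfloor n/2\rfloor$. In the even case $n=2m$, the graph $G$ is just the perfect matching $\{e_i=\{i,i+m\}:i=1,\dots,m\}$, whose edge ideal is precisely the Pascal ideal $I_{\textup{Pasc},n,n/2}$ of Definition \ref{def:Pascal}; so one could even bypass the graph-theoretic argument here and quote Theorem \ref{thm:maximumregtspread} (or Proposition \ref{prop:pasc}(b)) to get $\reg(I(G))=n-(n/2-1)=n/2+1$. In the odd case $n=2m+1$, I would first observe that $e_1,\dots,e_m$ still form a perfect matching on $[n-1]$ and that the extra edge $\{1,n\}$ attaches the vertex $n$, of degree $1$, to the vertex $1$; since $1$ is the only vertex of $G$ of degree $\ge 2$, the graph contains no cycle and hence is a forest --- explicitly, a disjoint union of the path on $\{n,1,m+1\}$ with the $m-1$ edges $e_2,\dots,e_m$.

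With $G$ known to be a forest, Proposition \ref{prop:forest} reduces everything to showing $\im(G)=m$. For $n$ even this is immediate: the $m$ pairwise disjoint edges $e_1,\dots,e_m$ are all of $E(G)$, so they form an induced matching, and no matching can have more than $\lfloor n/2\rfloor=m$ edges. For $n$ odd I would argue that the matching number of $G$ is $m$ (there are $m+1$ edges and the sole conflict is between the two edges meeting at vertex $1$), and then exhibit an induced matching of size $m$, e.g. $\{1,n\},e_2,\dots,e_m$: these are pairwise disjoint, and the subgraph induced on their vertex set $\{1,n\}\cup\bigcup_{i=2}^m\{i,i+m\}$ has no additional edge because the only remaining edge $e_1=\{1,m+1\}$ requires the vertex $m+1$, which is absent. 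Hence $\im(G)=m$ and $\reg(I(G))=m+1=\lfloor n/2\rfloor+1$ in both cases.

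I do not expect a real obstacle: once the graph is identified, the whole argument is bookkeeping. The only place that needs a moment of care is the odd case, where one must simultaneously check that inserting $\{1,n\}$ produces no cycle (it cannot, since $n$ was an isolated vertex beforehand) and that the induced matching number does not rise to $\lfloor n/2\rfloor+1$ (it cannot, since the ordinary matching number is already $\lfloor n/2\rfloor$). It is perhaps worth remarking, after the proof, that the odd case is \emph{not} a case of equality in Theorem \ref{thm:maximumregtspread}: there the spread is $t=\lfloor n/2\rfloor$, the bound $n-(t-1)=\lfloor n/2\rfloor+2$ overshoots the true value by one, and Proposition \ref{prop:forest} is genuinely required.
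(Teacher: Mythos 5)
Your proof is correct and follows the same route as the paper, which simply observes that $G$ is a forest in both cases and invokes Proposition \ref{prop:forest}. You supply the details the paper leaves implicit --- the explicit verification that $\im(G)=\lfloor n/2\rfloor$ (in particular the check that $e_1=\{1,m+1\}$ does not spoil the induced matching $\{1,n\},e_2,\dots,e_m$ in the odd case) --- and your closing remark that the odd case is not an instance of equality in Theorem \ref{thm:maximumregtspread} is accurate.
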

\begin{proof}
Indeed, in both cases $G$ is a forest and the assertion follows from Proposition \ref{prop:forest}.
\end{proof}

Now, we are ready to state and prove the analogous of Theorem \ref{thm:maximumregtspread} for the regularity of $t$--spread ideals generated in degree two.\smallskip

\begin{Cor}\label{thm:boundRegTSpreadEdgeIdeals}
	Let $n,t\ge 1$ with $n\ge 2t$ and let $I$ be a $t$--spread ideal equigenerated in degree two of $S=K[x_1,\dots,x_n]$. Then
	$$
	\reg(I)\ \le\ \left\lfloor\tfrac{n}{2}\right\rfloor+1.
	$$
\end{Cor}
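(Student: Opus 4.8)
The plan is to imitate the two-step structure of Theorem \ref{thm:maximumregtspread}, but now working inside the subclass $\mathcal T_2$ of $t$-spread ideals equigenerated in degree two. First I would reduce to finding the ideal $J\in\mathcal T_2$ of maximal regularity; by the same Taylor-resolution argument as in Theorem \ref{thm:mainteorragtspread}, such a $J$ may be taken to have $\bigcup_{u\in G(J)}\supp(u)=[n]$ and the minimum number of generators among those with that property. Since every generator has degree exactly $2$, each $\supp(u_i)$ has cardinality $2$, so we are really looking at a graph $G$ on $[n]$ whose edges are the pairs $\supp(u_i)$ and which satisfies the $t$-spread condition: every edge $\{a,b\}$ has $|a-b|\ge t$. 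The number of generators is then $|E(G)|$, and minimizing it subject to covering all $n$ vertices means we want a $t$-spread graph with no isolated vertices and the fewest edges — i.e. essentially a perfect matching (or near-perfect matching when $n$ is odd), which exists precisely because $n\ge 2t$: the pairs $\{i,\,i+\lfloor n/2\rfloor\}$ are all $t$-spread.

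Next I would pin down that the minimizer $J=I(G)$ is (up to relabeling) the edge ideal of the forest $G$ described in Corollary \ref{cor:forest}: when $n$ is even it is a perfect matching with $n/2$ disjoint edges, and when $n$ is odd one extra edge $\{1,n\}$ is forced to cover the last vertex, producing a path of length $2$ together with disjoint edges — still a forest. The argument that this particular configuration is forced (not just some $t$-spread matching) should follow the same inductive "least criminal" reasoning as in Claim 2 of Theorem \ref{thm:maximumregtspread}: pass from $n$ to $n-1$ by deleting variable $x_n$ from each generator, invoke the induction hypothesis on the $v_i$, and check which edge can absorb the vertex $n$ while staying $t$-spread, using $n\ge 2t$ to control the arithmetic. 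The main obstacle, as before, is making this reindexing/induction airtight — in particular handling the odd case cleanly, where one generator has degree effectively playing the role of the "long" support, and confirming that the resulting graph is always a forest so that Proposition \ref{prop:forest} applies.

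Finally, with $J=I(G)$ identified as the regularity-maximal element of $\mathcal T_2$, I would simply compute $\reg(J)$: $G$ is a forest, so by Proposition \ref{prop:forest}, $\reg(I(G))=\im(G)+1$, and the induced matching number of this $G$ is exactly $\lfloor n/2\rfloor$ — the disjoint edges $e_1,\dots,e_{\lfloor n/2\rfloor}$ form an induced matching, and no larger one can exist since $G$ has only $\lfloor n/2\rfloor$ edges in the even case, while in the odd case the extra edge $\{1,n\}$ shares the vertex $1$ with $e_1$ so it cannot enlarge a matching beyond $\lfloor n/2\rfloor$. Hence $\reg(J)=\lfloor n/2\rfloor+1$, which by Step 1 bounds $\reg(I)$ for every $I\in\mathcal T_2$, and the corollary follows. (Alternatively, one could bypass Proposition \ref{prop:forest} entirely and compute $\reg(J)$ directly from Theorem \ref{thm:teorpart}(b) in the even case, where $J$ is generated by a squarefree regular sequence of $n/2$ quadrics, giving $\reg(J)=n-(n/2-1)=n/2+1$; the odd case needs a small separate argument since one vertex is shared, and this is where Corollary \ref{cor:forest} is the more economical route.)
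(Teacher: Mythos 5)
Your proposal is correct and follows essentially the same route as the paper: exhibit the $t$--spread (near--)perfect matching $G$ with edges $\{i,i+\lfloor n/2\rfloor\}$ (possible since $n\ge 2t$), compute $\reg(I(G))=\lfloor n/2\rfloor+1$ via Proposition \ref{prop:forest}/Corollary \ref{cor:forest}, and invoke the Theorem \ref{thm:maximumregtspread}/cosize machinery to see that no $t$--spread ideal equigenerated in degree two can exceed this value. The middle portion of your argument, where you try to show by a least--criminal induction that the extremal configuration is \emph{forced} to be this particular $G$, is unnecessary (and the uniqueness claim is not literally true), but since every minimal covering by degree--two supports is a forest with induced matching number $\lfloor n/2\rfloor$, omitting or including that step does not affect the validity of the bound.
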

\begin{proof}
	We distinguish two cases: $n$ even and $n$ odd.\\\\
	\textsc{Case 1.} Let $n$ be even. Following Theorem \ref{thm:maximumregtspread}, it suffices to find $\left\lfloor\frac{n}{2}\right\rfloor=\tfrac{n}{2}$ $t$--spread monomials of degree two, whose supports are pairwise disjoint and such that their union is $[n]$. For this purpose, we can consider the graph $G$ on $[n]$ with
	$$
	E(G)\ =\ \Big\{e_i=\big\{i,i+\tfrac{n}{2}\big\}:i=1,\dots,\tfrac{n}{2}\Big\}.
	$$
	The ideal $I(G)$ is the $t$--spread ideal minimally generated by the required $\left\lfloor\frac{n}{2}\right\rfloor=\tfrac{n}{2}$ $t$--spread monomials. From Corollary \ref{cor:forest} (see also Theorem \ref{thm:teorpart}) and Theorem \ref{thm:maximumregtspread}, we have 
	 
	$$\reg (I) \le \reg(I(G)) = \left\lfloor\tfrac{n}{2}\right\rfloor + 1 = n-(\tfrac{n}{2}-1).$$
	\textsc{Case 2.} Let $n$ be odd. 
	Let us consider the graph $G$ on the vertex set $[n]$, with
	$$
	E(G)\ =\ \Big\{e_i=\big\{i,i+\left\lfloor\tfrac{n}{2}\right\rfloor\big\}:i=1,\dots,\left\lfloor\tfrac{n}{2}\right\rfloor\Big\}\cup\big\{\{1,n\}\big\}.
	$$
From Corollary \ref{cor:forest}, we have that
\[\reg(I(G)) = \left\lfloor\tfrac{n}{2}\right\rfloor + 1.\]
On the other hand, since the structure of
\[I(G) = (x_ix_{i+\left\lfloor\tfrac{n}{2}\right\rfloor}:i=1,\dots,\left\lfloor\tfrac{n}{2}\right\rfloor)+(x_1x_n),\]
from Theorem \ref{thm:maximumregtspread}, it follows that
\[\reg (I) \le \reg(I(G)) = \left\lfloor\tfrac{n}{2}\right\rfloor + 1.\]
\end{proof}
\begin{Cor}
	Let $n\ge 2$ and $I$ an edge ideal of $S=K[x_1,\dots,x_n]$. Then
	$$
	\reg(I)\ \le\ \left\lfloor\tfrac{n}{2}\right\rfloor+1.
	$$
\end{Cor}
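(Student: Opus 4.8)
The plan is to derive this corollary as an immediate consequence of Corollary~\ref{thm:boundRegTSpreadEdgeIdeals}, since an edge ideal is precisely a $1$-spread ideal equigenerated in degree two. First I would observe that any edge ideal $I=I(G)$ of $S=K[x_1,\dots,x_n]$ is a monomial ideal generated by squarefree monomials of degree $2$; since every squarefree monomial is a $1$-spread monomial, $I$ is a $1$-spread ideal equigenerated in degree two. Moreover, the hypothesis $n\ge 2$ trivially satisfies $n\ge 2t$ when $t=1$. Hence the hypotheses of Corollary~\ref{thm:boundRegTSpreadEdgeIdeals} are met with $t=1$.

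Applying Corollary~\ref{thm:boundRegTSpreadEdgeIdeals} with $t=1$ directly yields
$$
\reg(I)\ \le\ \left\lfloor\tfrac{n}{2}\right\rfloor+1,
$$
which is the asserted bound. I would also remark, for completeness, that this is consistent with the general bound of Theorem~\ref{thm:maximumregtspread}: taking $t=1$ there gives only $\reg(I)\le n$, so the passage through the degree-two restriction and the forest construction of Corollary~\ref{cor:forest} is genuinely what produces the sharper value $\lfloor n/2\rfloor+1$.

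There is essentially no obstacle here: the only thing to be careful about is the edge case where $G$ has no edges (so $I=0$), in which the statement is vacuous or interpreted via $\reg(0)$, and the degenerate possibility that $I$ is not equigenerated because some generator is a "loop" $x_i^2$ — but in the standard definition of an edge ideal of a simple graph all generators are genuine degree-two squarefree monomials $x_ix_j$ with $i\ne j$, so this does not arise. Thus the proof is a one-line specialization.

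\begin{proof}
An edge ideal $I=I(G)$ of $S=K[x_1,\dots,x_n]$ is generated by the squarefree monomials $x_ix_j$ with $\{i,j\}\in E(G)$, hence it is a squarefree, that is a $1$--spread, ideal equigenerated in degree two. Since $n\ge 2 = 2\cdot 1$, the hypotheses of Corollary \ref{thm:boundRegTSpreadEdgeIdeals} are satisfied for $t=1$, and that result gives
$$
\reg(I)\ \le\ \left\lfloor\tfrac{n}{2}\right\rfloor+1.
$$
\end{proof}
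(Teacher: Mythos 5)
Your proof is correct and matches the paper's intent exactly: the corollary is stated immediately after Corollary \ref{thm:boundRegTSpreadEdgeIdeals} with no proof precisely because it is the $t=1$ specialization, an edge ideal being a squarefree (hence $1$--spread) ideal equigenerated in degree two with $n\ge 2=2t$. Nothing further is needed.
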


We close the section with a slight modification of the proof of Corollary \ref{thm:boundRegTSpreadEdgeIdeals} that, given three positive numbers $n,d,t\ge 1$ such that $n\ge 1+(d-1)t$, allows us to  obtain an optimal upper bound for the regularity of a $t$--spread ideal generated in degrees at most $d\ge1$.

\begin{Thm}\label{thm:boundRegTSpreadGeneratedInDegreeDAtMostD}
	Let $n,d,t\ge 1$ with $n\ge 1+(d-1)t$ and let $I$ be a $t$--spread ideal of $S=K[x_1,\dots,x_n]$ generated in degrees at most $d$. Then
	\begin{equation}\label{eq:generalizedBoundReg}
	\reg(I)\ \le\ n+1-\max\{\left\lceil\tfrac{n}{d}\right\rceil,t\}.
	\end{equation}
\end{Thm}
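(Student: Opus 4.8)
The plan is to mimic the strategy of Theorem \ref{thm:maximumregtspread} and Corollary \ref{thm:boundRegTSpreadEdgeIdeals}: among all $t$--spread ideals of $S$ generated in degrees at most $d$, identify an explicit ideal $J$ of maximal regularity and compute $\reg(J)$. First I would argue, as in Step 1 of Theorem \ref{thm:maximumregtspread}, that it suffices to consider a $t$--spread ideal $J$ whose generators are monomials with pairwise disjoint supports whose union is $[n]$, each of degree at most $d$, and with the minimum possible number $p$ of generators; by Theorem \ref{thm:teorpart} such a $J$ maximizes the regularity since $\reg(J)=|\Omega|-(p-1)=n-(p-1)$, and minimizing $p$ maximizes this quantity. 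So the whole problem reduces to determining the minimum number $p$ of $t$--spread monomials, each of degree $\le d$, with pairwise disjoint supports partitioning $[n]$.

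Next I would pin down this minimum $p$. Two constraints compete: each part has size at most $d$, so $p\ge \lceil n/d\rceil$; and each part, being the support of a $t$--spread monomial in $n$ variables, forces (as in Claim 2 of Theorem \ref{thm:maximumregtspread}, using the $t$--spread congruence condition) that a partition of $[n]$ into $t$--spread supports needs at least $t$ parts — concretely, the residue classes mod $t$ already give the obstruction, so $p\ge t$. Hence $p\ge \max\{\lceil n/d\rceil, t\}$. Then I would exhibit a partition achieving this bound: set $p=\max\{\lceil n/d\rceil,t\}$ and split $[n]$ into $p$ arithmetic-progression-like blocks of step $t$ (refining the mod-$t$ classes when $\lceil n/d\rceil > t$, i.e., cutting each residue class into consecutive chunks of size $\le d$), arranged so each block has size $\le d$ and each is the support of a $t$--spread monomial. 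A short counting check shows that $p$ blocks of size $\le d$ can cover $n$ elements precisely because $p\ge \lceil n/d\rceil$, and the step-$t$ structure guarantees $t$--spreadness as long as $p\ge t$; the hypothesis $n\ge 1+(d-1)t$ is exactly what ensures degree $d$ is attainable (compare \eqref{eq:max}), so the construction is non-vacuous.

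Having constructed $J$ with $\max\{\lceil n/d\rceil,t\}$ generators of pairwise disjoint supports, I would invoke Theorem \ref{thm:teorpart}(b) to get
\[
\reg(J)\ =\ n-\big(\max\{\lceil n/d\rceil,t\}-1\big)\ =\ n+1-\max\{\lceil n/d\rceil,t\},
\]
and then conclude that every $t$--spread ideal $I$ generated in degrees at most $d$ satisfies $\reg(I)\le \reg(J)$, since by Theorem \ref{thm:mainteorragtspread}(b) the regularity is controlled by the cosize, which is $|\Omega|$ minus the minimal number of generators whose supports cover $\Omega$, and this is maximized exactly by the extremal configuration $J$.

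The main obstacle I anticipate is the combinatorial lemma that the minimum number of parts in a partition of $[n]$ into $t$--spread supports with each part of size $\le d$ is precisely $\max\{\lceil n/d\rceil,t\}$: the lower bound $p\ge t$ requires the mod-$t$ argument from Claim 2 adapted to partitions rather than covers, and one must check that the two lower bounds $\lceil n/d\rceil$ and $t$ cannot conspire to force something strictly larger — i.e., that an explicit valid partition of size $\max\{\lceil n/d\rceil,t\}$ always exists. Writing down this partition cleanly (handling the floor/ceiling bookkeeping and verifying $t$--spreadness of each block) is the one genuinely fiddly step; everything else is a direct transcription of the arguments already established for Theorems \ref{thm:mainteorragtspread}, \ref{thm:teorpart}, and \ref{thm:maximumregtspread}.
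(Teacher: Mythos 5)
Your overall strategy coincides with the paper's: reduce to finding the minimum number $p$ of $t$--spread monomials of degree at most $d$ whose supports are pairwise disjoint and partition $[n]$, show that $p=\max\{\lceil n/d\rceil,t\}$, and apply Theorem \ref{thm:teorpart} to the resulting complete--intersection (Pascal--type) ideal together with the cosize bound of Theorem \ref{thm:mainteorragtspread}(b). The two lower bounds $p\ge\lceil n/d\rceil$ and $p\ge t$ are correct, although your justification of the second is slightly off: the support of a $t$--spread monomial need not be contained in a single residue class mod $t$ (e.g.\ $\{1,5\}$ is $3$--spread), so the residue classes are not themselves the obstruction; the clean argument is that a $t$--spread support meets any window of $t$ consecutive integers in at most one element, so covering $\{n-t+1,\dots,n\}$ already requires $t$ parts.

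The genuine gap is in your construction of the extremal partition when $\lceil n/d\rceil>t$. You propose to take the $t$ residue classes mod $t$ and cut each into consecutive chunks of size at most $d$; this produces $\sum_{j=1}^{t}\lceil |C_j|/d\rceil$ parts, which can strictly exceed $\lceil n/d\rceil$. For instance, with $n=8$, $t=2$, $d=3$ (so $n\ge 1+(d-1)t$ holds and $\lceil n/d\rceil=3>t$), the two residue classes each have size $4$ and each splits into $\lceil 4/3\rceil=2$ chunks, giving $4$ parts instead of $3$; your construction therefore yields only the weaker bound $n+1-4$ rather than $n+1-3$. The fix --- and what the paper actually does --- is to abandon step $t$ and use arithmetic progressions with common difference $\lceil n/d\rceil$, i.e.\ the Pascal ideal of type $(n,\lceil n/d\rceil)$: it has exactly $\lceil n/d\rceil$ generators, each of degree at most $d$ because $\lfloor (n-1)/\lceil n/d\rceil\rfloor+1\le d$, and each is $t$--spread precisely because $\lceil n/d\rceil\ge t$ in this case (in the example, $(x_1x_4x_7,\,x_2x_5x_8,\,x_3x_6)$). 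In the complementary case $\lceil n/d\rceil<t$ the hypothesis $n\ge 1+(d-1)t$ forces $d=\lfloor(n-1)/t\rfloor+1$, so the Pascal ideal of type $(n,t)$ is admissible and gives $n+1-t$. With that corrected construction your argument is the paper's proof; the remaining reduction step (that the extremal partition controls $\reg(I)$ for \emph{every} such $I$ via the cosize) is asserted at the same level of detail as in the paper itself.
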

\begin{proof}
	Following Theorem \ref{thm:maximumregtspread}, it suffices to find the minimum possible number of $t$--spread monomials of degrees at most $d$ whose supports are pairwise disjoint and such that the union of their supports is $[n]$. We prove that this number is $\max\{\left\lceil\tfrac{n}{d}\right\rceil,t\}$.
We distinguish two cases.\\\\\
\textsc{Case 1.} $\left\lceil\tfrac{n}{d}\right\rceil\ge t$.
Suppose 
$n=\ell+d\left\lceil\tfrac{n}{d}\right\rceil$, $0\le\ell\le d-1$.
Using analogous techniques as in Proposition~\ref{prop:pasc}, we consider the following $t$--spread monomials
$$
u_i\ =\ \begin{cases}
x_ix_{i+\left\lceil\tfrac{n}{d}\right\rceil}\cdots x_{i+(d-2)\left\lceil\tfrac{n}{d}\right\rceil}x_{i+(d-1)\left\lceil\tfrac{n}{d}\right\rceil}&\text{if}\ i=1,\dots,\ell,\\
x_ix_{i+\left\lceil\tfrac{n}{d}\right\rceil}\cdots x_{i+(d-2)\left\lceil\tfrac{n}{d}\right\rceil}&\text{if}\ i=\ell+1,\dots,\left\lceil\tfrac{n}{d}\right\rceil.\\
\end{cases}
$$

Since $\left\lceil\tfrac{n}{d}\right\rceil\ge t$, the monomials $u_i$ are $t$--spread monomials of degrees at most $d$. 
Let us consider the ideal $J=(u_1, \ldots, u_{\left\lceil\tfrac{n}{d}\right\rceil})$. Since $J$ is the Pascal ideal of type $(n, \left\lceil\tfrac{n}{d}\right\rceil)$, 
then by Theorem \ref{thm:teorpart},
$$
\reg(J)\ =\ n-(\left\lceil\tfrac{n}{d}\right\rceil-1)\ =\ n+1-\left\lceil\tfrac{n}{d}\right\rceil.$$
Therefore, $J$ is the $t$--spread ideal generated in degrees at most $d$ with the maximum possible regularity. Indeed, the minimum possible number of generators of a squarefree ideal generated in degrees at most $d$, is $\left\lceil\tfrac{n}{d}\right\rceil$. \\\\
	\textsc{Case 2.} $\left\lceil\tfrac{n}{d}\right\rceil<t$. In such case, $n<dt$, thus $1+(d-1)t\le n<dt$.
	\\
	Hence, 
	$d-1\le\tfrac{n-1}{t}< d-\tfrac{1}{t}$, and so
	$$
	d-1\le\left\lfloor\tfrac{n-1}{t}\right\rfloor<d.
	$$
	Finally, $d\le\left\lfloor\tfrac{n-1}{t}\right\rfloor+1<d+1$. Therefore, $\left\lfloor\tfrac{n-1}{t}\right\rfloor+1=d$. By (\ref{eq:max}), $\ell=\left\lfloor\tfrac{n-1}{t}\right\rfloor+1=d$ is the maximum degree of a $t$--spread monomial of $S=K[x_1,\dots,x_n]$, and so the Pascal ideal of type $(n,t)$ gives the maximum regularity of a $t$--spread ideal of $S$ generated in degree at most $\ell=d$. Since $\left\lceil\tfrac{n}{d}\right\rceil<t$, we have $\max\{\left\lceil\tfrac{n}{d}\right\rceil,t\}=t$. Hence, $n+1-\max\{\left\lceil\tfrac{n}{d}\right\rceil,t\}=n-(t-1)$ and the bound given in (\ref{eq:generalizedBoundReg}) is true. The statement is proved.
\end{proof}
\begin{Rem}\rm
Theorem \ref{thm:boundRegTSpreadGeneratedInDegreeDAtMostD} generalizes Theorem \ref{thm:maximumregtspread}. Let $d$ be the maximum degree of a $t$--spread monomial of $S=K[x_1,\dots,x_n]$, then $\tfrac{n}{d}\le t$. Indeed, if $\tfrac{n}{d}>t$, then $n>dt$. Hence, $n\ge1+dt$ and $x_1x_{1+t}\cdots x_{1+dt}$ will be a $t$--spread monomial of $S=K[x_1,\dots,x_n]$ of degree $d+1>d$. An absurd by the meaning of $d$. It follows that $\tfrac{n}{d}\le t$. Thus $\left\lceil\tfrac{n}{d}\right\rceil\le t$ and $\max\{\left\lceil\tfrac{n}{d}\right\rceil,t\}=t$. Therefore, by equation (\ref{eq:generalizedBoundReg}), we get the maximum possible regularity of a $t$--spread ideal of $S$, \emph{i.e.}, $n+1-t$, and we obtain again Theorem \ref{thm:maximumregtspread}.
\end{Rem}

\end{document}